\newtheorem{theorem}{Theorem}[section]
\newtheorem{lemma}[theorem]{Lemma}
\newtheorem{corollary}[theorem]{Corollary}
\newtheorem{proposition}[theorem]{Proposition}
\theoremstyle{definition}
\newtheorem{remark}[theorem]{Remark}
\newtheorem{problem}[theorem]{Problem}
\newcommand{\diag}{\mbox{\rm diag}}
\newcommand{\ch}{\mbox{\rm char}}
\begin{document}

\title[Products of unipotent elements ...]{Products of unipotent elements in \\ certain algebras}

\author[M. H. Bien]{M. H. Bien$^{1,2}$}
\author[P. V. Danchev]{P. V. Danchev$^{3}$}
\author[M. Ramezan-Nassab]{M. Ramezan-Nassab$^{4}$}
\author[T. N. Son]{T. N. Son$^{1,2}$}
	
\address{[1] Faculty of Mathematics and Computer Science, University of Science, Ho Chi Minh City, Vietnam}
	
\address{[2] Vietnam National University, Ho Chi Minh City, Vietnam}
	
\address{[3] Institute of Mathematics and Informatics, Bulgarian Academy of Sciences, 1113 Sofia, Bulgaria}
	
\address{[4] Department of Mathematics, Kharazmi University, 50 Taleghani Street, Tehran, Iran}

\medskip
	
\email{M. H. Bien: mbbien@hcmus.edu.vn; \newline
		Peter V. Danchev: danchev@math.bas.bg; pvdanchev@yahoo.com\newline
		M. Ramezan-Nassab: ramezann@khu.ac.ir \newline
		T. N. Son: trannamson1999@gmail.com}

\keywords{Group algebras; Commutators; Unipotent elements; Division rings. \\
\protect
\indent 2020 {\it Mathematics Subject Classification.} 15B33; 16U60; 16K40; 20C07.}
	
\begin{abstract}
Let $F$ be a field with at least three elements and $G$ a locally finite group. This paper aims to show that if either $F$ is algebraically closed or the characteristic of $F$ is positive, then an element $\alpha $ in the group algebra $FG$ is a product of unipotent elements if, and only if, $\alpha$ lies in the first derived subgroup of the unit group $(FG)^*$ of $FG$. In addition, $\alpha$ is a product of at most three unipotent elements.

Moreover, we explore some crucial properties satisfied by certain algebras like the connection between unipotent elements of index $2$ and commutators as well as we investigate the unipotent radical of a group algebra by showing that the group algebra of a finite group over an infinite field cannot have a unipotent maximal subgroup. In particular, we apply these results to twisted group algebras. 
\end{abstract}

\maketitle


\section{Introduction}
	
Let $R$ be an algebra. An element $x\in R$ is called \textit{unipotent} if $1-x$ is nilpotent. Additionally, if $k$ is the smallest positive integer such that $(1-x)^k=0$, then $x$ is called \textit{unipotent of index $k$}. Unipotent elements are very important in the study of the algebraic structure of algebras. One of the nicest results concerning unipotent elements is that we may describe its inverse explicitly. In fact, if $x\in R$ is unipotent of index $k$, then $x$ is invertible and $$x^{-1}=(1-(1-x))^{-1}=1+(1-x)+\cdots+(1-x)^{k-1}.$$ In some papers concerning to Hartley's conjecture for group algebras with units satisfying group identities, many authors use unipotent elements of index $2$ (e.g., see \cite{Pa_GiMiSe_10, Pa_GiSeVa_00, Pa_Li_99, Pa_RaBiAk_21}); in fact, to find elements which generate a free subgroup, many authors seek unipotent elements (e.g., see \cite{Pa_GoPa_04, Pa_GoMa_01}, for a survey on this topic, we refer the interested readers to \cite{Pa_GoAn_13}). Notice that, in \cite{Pa_MaSe_97}, it is questioned that in a torsion-free ring $R$ with two elements $a,b\in R$ such that $a^2=b^2=0$, when does $1 + a$ and $1 + b$ generate a free subgroup?
	
Throughout the present article, we denote by $R^*$ the unit group of $R$ and by $R'$ the first derived subgroup of $R^*$, i.e., the subgroup generated by all the commutators $[x,y]=xyx^{-1}y^{-1}$, where $x,y\in R^*$. It seems to be an interesting topic to investigate the subgroup of $R^*$ generated by all unipotent elements in $R$. One special case which has received much attention is special linear groups. Let $F$ be a field and $n$ a positive integer. It is well known that every unipotent matrix in $\mathrm{M}_n(F)$ is similar to an {\it unitriangular} matrix, i.e., a (upper) triangular matrix whose all diagonal entries are $1$ (see, e.g., \cite[Lemma~3.2]{Abdi}), which implies that every unipotent matrix belongs to the special linear group $\mathrm{SL}_n(F)$. Conversely, observe that each matrix in $\mathrm{SL}_n(F)$ is a product of elementary matrices, which are unipotent of index $2$, so  $\mathrm{SL}_n(F)$ is generated by all unipotent matrices in $\mathrm{M}_n(F)$. In 1986, Fong and Sourour showed that if $F=\mathbb{C}$, the field of complex numbers, then every matrix in $\mathrm{SL}_n(\mathbb{C})$ is a product of at most three unipotent matrices (see \cite{Pa_FoSo_86}). In the case of index $2$, it was shown that every matrix in $\mathrm{SL}_n(\mathbb{C})$ is a product of at most four unipotent matrices of index $2$ (see \cite{Pa_WaWu_91}).
	
Let $G$ be a group and $F$ a field, and suppose $FG$ is the group algebra of $G$ over $F$. In this paper, we consider nilpotent elements and products of nilpotent elements in group algebras. As mentioned above, in group algebras, unipotent elements play a key role in the study of the units of group algebras (e.g, see \cite{Pa_ArHa_98,Pa_Bov_95,Pa_DaMa_18,Pa_GoVe_09,Pa_Ha_07,Pa_Ku_19}). Especially, Danchev and Al-Mallah investigate groups $G$ and fields $F$ such that all units of $FG$ are unipotent elements \cite{Pa_DaMa_18}.  	
	
The first aim, which motivates writing of this paper, is devoted to the investigation of the subgroup of the unit group $(FG)^*$ generated by all unipotent elements in $FG$ in the case where $F$ is algebraically closed and $G$ is locally finite. Our second motivating goal is concerned with the behavior of unipotent elements of index $2$ situated with commutators by calculating in some critical situations the upper bound of the number of unipotents. Finally, in order to synchronize the pursued objectives, our third purpose deals with the unipotent radical of some special subgroups in (twisted) group algebras.

Concretely, our work is systematically organized as follows: In Section~2, we will prove that if $G$ is locally finite and $F$ is a field, then $(FG)'$  is generated by all unipotent elements. In the case when $F$ is either algebraically closed or of positive characteristic, then an element in $(FG)'$ is a product of at most three unipotent elements (see, for instance, Theorems~\ref{twist1} and \ref{twist2}). This may be considered as an analog of the well-known result for matrix algebras over fields (see, e.g., \cite{Pa_BiDuHaSo_22}). Furthermore, in the next two sections, namely Sections 3 and 4, we consider some special cases when the group algebra $FG$ is semi-simple and our chief result of this case is to prove that every element in $(FG)'$ is a product of at most two commutators of unipotent elements of index $2$ (see, for example, Theorems~\ref{final} and \ref{bound}). In the final fifth section, we study the unipotent radical of two important classes of the general skew linear groups. The most important achievements are structured in Theorems~\ref{subnormal} and \ref{234}.

	
\section{The derived subgroup of some algebras and unipotent elements}

Let $D$ be a division ring and $n\geq 2$ an integer. Throughout the text, ${\rm SL}_n(D)$ denotes the kernel of the classical Dieudonn$\acute{\rm e}$ determinant. It is well known that if $D\neq {\mathbb F}_2$, then $(\mathrm{GL}_n(D))' =\mathrm{SL}_n(D)$. For $n=1$, we agree that $\mathrm{SL}_1(D)=D'$. 

In this aspect, we first have the following plain but useful claim.
	
\begin{lemma}\label{l3.1}
Let $D$ be a division ring with at least three elements and $n\geq 2$ an integer. Then $A\in \mathrm{M}_n(D)$ is a product of unipotent matrices if and only if $A\in \mathrm{SL}_n(D)$.
\end{lemma}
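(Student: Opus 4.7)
The plan is to prove the two implications by invoking two classical facts about matrices over a division ring: the Jordan-type normal form for unipotent matrices already cited in the introduction, and the generation of $\mathrm{SL}_n(D)$ by elementary transvections under the stated hypotheses.

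For the forward implication, I would argue that every unipotent $U\in\mathrm{M}_n(D)$ lies in $\mathrm{SL}_n(D)$. By \cite[Lemma~3.2]{Abdi}, $U$ is similar to an upper unitriangular matrix $T$. Since the Dieudonn\'e determinant of an upper triangular matrix is the image in $D^*/[D^*,D^*]$ of the product of its diagonal entries, we have $\det_D(T)=1$; multiplicativity of $\det_D$ (hence conjugation-invariance) then yields $\det_D(U)=1$, so $U\in\mathrm{SL}_n(D)$. As $\mathrm{SL}_n(D)$ is a subgroup of $\mathrm{GL}_n(D)$, any product of unipotent matrices still belongs to it.

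For the reverse implication, the key input is the classical theorem that, for $n\geq 2$ and a division ring $D$ with $|D|\geq 3$, the group $\mathrm{SL}_n(D)$ coincides with the subgroup $E_n(D)$ generated by the elementary transvections $t_{ij}(d)=I_n+dE_{ij}$ with $i\neq j$ and $d\in D$; the hypothesis $|D|\geq 3$ is precisely what allows the standard row-reduction argument to succeed in the $2\times 2$ case. A direct computation using $E_{ij}E_{ij}=0$ for $i\neq j$ shows that $(I_n-t_{ij}(d))^2=(dE_{ij})^2=0$, so each $t_{ij}(d)$ is unipotent of index at most $2$; hence every $A\in\mathrm{SL}_n(D)$ is visibly a product of unipotent matrices. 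I do not anticipate a genuine obstacle: both ingredients are well-established and can simply be cited (for the generation fact, e.g.\ the Hahn--O'Meara book on classical groups), and the only subtlety worth flagging is that, because $D$ may be noncommutative, one should verify $(dE_{ij})^2=0$ directly from $E_{ij}E_{ij}=0$ rather than attempt to pull out a factor of $d^2$.
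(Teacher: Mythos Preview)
Your proposal is correct and follows essentially the same approach as the paper's own proof, which is a one-line citation of \cite[Lemma~3.2]{Abdi} (for the forward direction, via the unitriangular normal form) and \cite[Theorem~3, p.~137]{Bo_Dra_83} (Draxl's book, for the generation of $\mathrm{SL}_n(D)$ by elementary transvections). Your only deviation is the choice of reference for the second fact; one minor inaccuracy is your remark that $|D|\geq 3$ is what makes row-reduction succeed, since in fact $\mathrm{SL}_n(D)=E_n(D)$ holds for every division ring $D$ and $n\geq 2$; the hypothesis $|D|\geq 3$ is only needed for the identification $\mathrm{SL}_n(D)=(\mathrm{GL}_n(D))'$ used elsewhere in the paper.
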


\begin{proof}
This is directly inferred from  \cite[Lemma~3.2]{Abdi} and \cite[Theorem~3, p.~137]{Bo_Dra_83}.
\end{proof}


Note that the matrix

$$A=\left(\begin{matrix}
		1&1\\0&1
		\end{matrix}\right)\in \mathrm{GL}_2({\mathbb F}_2) =\mathrm{SL}_2({\mathbb F}_2)$$
is a unipotent element, but $A\notin  (\mathrm{GL}_2(D))'$ (see, e.g., \cite[Proposition~2]{Pa_SoDuHaBi_22} and its proof). Thus, in Lemma~\ref{l3.1} (and almost throughout the paper), we shall assume that $|F|>2$.

\medskip

Let $R, R_1,\dots,R_t$ be algebras and $x\in R$. If $x$ is unipotent, then it is readily checked that so are both $x^{-1}$ and $yxy^{-1}$ for every $y\in R^*$. If, however, each $x_i\in R_i$ is a product of at most $k$ unipotent elements, then $x=(x_1,\ldots,x_t)\in R_1\times  \cdots \times R_t$ is a product of at most $k$ unipotent elements in  $R_1\times \cdots \times R_t$. These facts are used frequently in the sequel without concrete referring.

If $R$ is a semi-simple ring, then by the Artin-Wedderburn theorem we can write $R\cong\prod_{i=1}^t\mathrm{M}_{n_i}(D_i)$, where all $n_i$'s are positive integers and all $D_i$'s are division rings. By saying ``each Wedderburn component of $R$ is not a non-commutative division ring", our mean is that in the above decomposition, for each $i$, either $n_i>1$, or $n_i=1$ and $D_i$ is a field.

We thus arrive at the following useful statement.

\begin{proposition} Suppose that $F$ is a field with at least three elements and  $R$ is an $F$-algebra.
				\begin{itemize}
					\item[(1)] If $R$ is left Artinian, then every element which is a product of unipotent elements in $R$ belongs to $R'$.
					\item[(2)] If $R$ is semi-simple such that each Wedderburn component of $R$ is not a non-commutative division ring, then $\alpha\in R'$ if and only if $\alpha$ is a product of unipotent elements.
				\end{itemize}
\end{proposition}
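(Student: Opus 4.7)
My plan is to treat part~(2) first and then reduce part~(1) to it via the Jacobson radical.

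For part~(2), I would begin with the Artin--Wedderburn decomposition $R\cong\prod_{i=1}^{t}\mathrm{M}_{n_{i}}(D_{i})$. The standing hypothesis that no Wedderburn component is a non-commutative division ring translates exactly to the statement that whenever $n_{i}=1$, the division ring $D_{i}$ is actually a field. A unipotent element of $R$ corresponds to a tuple $(u_{i})$ whose entries are componentwise unipotent, so I would handle the two kinds of components separately. In a field component ($n_{i}=1$ with $D_{i}$ commutative), the only nilpotent is $0$, so the only unipotent is $1$, which lies trivially in the (trivial) commutator subgroup. In a matrix component with $n_{i}\geq 2$, the bound $|D_{i}|\geq|F|>2$ holds, so I would invoke Lemma~\ref{l3.1} together with the equality $(\mathrm{GL}_{n_{i}}(D_{i}))'=\mathrm{SL}_{n_{i}}(D_{i})$ recalled at the start of this section to conclude that each unipotent matrix lies in the component derived subgroup. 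Since commutators in a finite direct product are taken componentwise, this immediately yields the ``$\Leftarrow$'' direction of (2).

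For the ``$\Rightarrow$'' direction of~(2), I would take $\alpha\in R'$ and observe that in each component $\alpha_{i}\in(\mathrm{GL}_{n_{i}}(D_{i}))'$. In a field component this forces $\alpha_{i}=1$, which is vacuously a product of unipotents. In a matrix component with $n_{i}\geq 2$, Lemma~\ref{l3.1} gives that $\alpha_{i}\in\mathrm{SL}_{n_{i}}(D_{i})$ is a product of unipotents in $\mathrm{M}_{n_{i}}(D_{i})$. To assemble these componentwise factorizations into a single factorization in $R$, I would pad the shorter ones with the identity, a unipotent of index~$1$, until all components have factorizations of the same length, and then take tuples componentwise, using the elementary observation recorded just before the proposition that such a tuple is a unipotent of $R$.

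For part~(1), the plan is to reduce to the semisimple case. Since $R$ is left Artinian, the Jacobson radical $J=J(R)$ is nilpotent, so the canonical map $R^{*}\twoheadrightarrow(R/J)^{*}$ is surjective with kernel $1+J$, and it therefore induces a surjection $R'\twoheadrightarrow(R/J)'$. A product $\alpha$ of unipotents in $R$ projects to a product $\overline{\alpha}$ of unipotents in the semisimple quotient $R/J$, which by the argument of the ``$\Leftarrow$'' direction of~(2) lies in $(R/J)'$. Hence $\alpha=vw$ for some $v\in R'$ and $w\in 1+J$. The main obstacle, and the step where I expect the argument to be most delicate, is absorbing the factor $w\in 1+J$ into $R'$; I would attack this by lifting a full set of matrix units from the matrix components of $R/J$ of size $n_{i}\geq 2$ via lifting of idempotents, and using them together with suitable elements of $J$ to write each $1+j$ as an explicit product of commutators of the form $[1+a,b]$ with $a$ nilpotent. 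This realization of $1+J$ inside $R'$, rather than the Wedderburn reduction itself, is where the real work lies and where I expect the hypothesis $|F|>2$ to become crucial.
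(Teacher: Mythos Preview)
Your treatment of part~(2) is correct and matches the paper's approach essentially line for line: reduce to the Wedderburn components, handle the $1\times 1$ field components trivially, and invoke Lemma~\ref{l3.1} together with $(\mathrm{GL}_{n_i}(D_i))'=\mathrm{SL}_{n_i}(D_i)$ on each block with $n_i\geq 2$.

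For part~(1), however, there is a genuine gap---and, to your credit, you have explicitly isolated it where the paper has not. You correctly observe that passing to the semisimple quotient $R/J$ only yields $\alpha\in R'\cdot(1+J)$, and you identify the absorption of the factor $w\in 1+J$ into $R'$ as ``the main obstacle''. But this absorption is impossible in general, so no strategy based on lifting matrix units can succeed. Take $R=F[x]/(x^{2})$ with $|F|\geq 3$: this is a commutative local Artinian $F$-algebra, so $R^{*}$ is abelian and $R'=\{1\}$, yet $1+x$ is a nontrivial unipotent element. There are no matrix blocks of size $\geq 2$ in $R/J\cong F$ from which to lift anything. Thus part~(1), as stated, is actually false, and your plan cannot be completed.

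The paper's own proof glosses over exactly the step you flagged: it writes
\[
\overline{\alpha}\in\prod_i\mathrm{SL}_{n_i}(D_i)\cong R'/(R'\cap(1+J(R)))
\]
and then asserts ``Therefore, $\alpha\in R'$''. But membership of $\overline{\alpha}$ in that quotient only gives $\alpha\in R'\cdot(1+J(R))$, which is precisely the point at which your own outline stalls. If one adds the hypothesis $J(R)=0$ (i.e.\ semisimplicity), or more generally $1+J(R)\subseteq R'$, the argument goes through; without it, it does not.
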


\begin{proof} (1) Since $R$ is  a left Artinian ring, one finds that $$R/J(R)\cong \mathrm{M}_{n_1}(D_1)\times  \dots \times \mathrm{M}_{n_t}(D_t),$$
where $J(R)$ denotes the Jacobson radical of $R$, $n_i$'s are positive integers and $D_i$'s are division rings.  Thus,
$$R^*/ (1+J(R))\cong (R/J(R))^*\cong \mathrm{GL}_{n_1}(D_1)\times \dots \times \mathrm{GL}_{n_t}(D_t).$$
				
If $\alpha\in R$ is a product of unipotent elements, then so is $\overline \alpha$ in $R/J(R)$, which implies by Lemma~\ref{l3.1} that
$$\overline \alpha\in \mathrm{SL}_{n_1}(D_1)\times \dots \times \mathrm{SL}_{n_t}(D_t)\cong (R^*/ (1+J(R))'\cong R'/(R'\cap (1+J(R)).$$
Therefore,  $\alpha\in R'$.
				
(2) Assume that $R$ is semi-simple and each Wedderburn component of $R$ is not a non-commutative  division ring, that is, in the Wedderburn decomposition above, for every $i$, either $n_i>1$ or $n_i=1$ and $D_i$ is a field.  According to the first part, we need to show if $\alpha\in R'$, then $\alpha$ is a product of unipotent elements. Assume that $\alpha=(a_1,a_2,\ldots,a_t)$. For every $1\le i\le t$, $a_i\in \mathrm{SL}_{n_i}(D_i)$. By Lemma~\ref{l3.1}, $a_i$ is a product of unipotent matrices in $\mathrm{M}_{n_i}(D_i)$. Hence, $\alpha$  is a product of unipotent elements in $R^*$.
\end{proof}


The next comments are helpful in order to understand more deeply the difficulty in the situation of rational group algebras.
		
\begin{remark} Note that if $K_8$ is the quaternion group, then the rational group algebra $\mathbb{Q}K_8$ contains a non-commutative division ring in its Wedderburn components. In fact, $\mathbb{Q}K_8\simeq\mathbb{Q}  \times\mathbb{Q}  \times \mathbb{Q} \times \mathbb{Q} \times \mathbb{H}_\mathbb{Q}$. Clearly, $(\mathbb{Q}K_8)'$ contains strictly the subgroup of $(\mathbb{Q}K_8)^*$ generated by unipotent elements.

Nevertheless, one approach that might avoid this difficulty with division rings in some specific cases, is the following: embed the finite group $G$ in the symmetric group $S$ on its elements, i.e., the symmetric group generated by the elements of $G$. Thus, the rational group algebra $\mathbb{Q}G$ obviously embeds in $\mathbb{Q}S$. Now, $\mathbb{Q}S$ is a direct sum of full matrix rings over $\mathbb{Q}$, that is, its Wedderburn components are all full matrix rings over $\mathbb{Q}$. Furthermore, start with an element of $\mathbb{Q}G$ and work in $\mathbb{Q}S$, not trying to work in $\mathbb{Q}G$. For example, we know that $S$ has trivial center. So, if we need a non-central group element, choose it from $G$ (assuming $G$ is not $1$). Also, if we need two non-commuting group elements, choose them from $G$ (assuming $G$ is non-abelian), etc.

To demonstrate that this approach does not work in general, let us embed the aforementioned group $K_8$ in the symmetric group $S_8$. Then, as we noted, $\mathbb{Q}K_8$ embeds in $\mathbb{Q}S_8$, which is a finite direct sum of full matrix ring over $\mathbb{Q}$. Thus, each element of the derived group of $\mathbb{Q}K_8$ as an element of $\mathbb{Q}S_8$ is a product of at most three unipotents by Lemma~\ref{l3.6}, while the only unipotent of $\mathbb{Q}K_8$ is $1$, that is nonsense.

However, this idea may work successfully for the matrix ring ${\rm M}_n(\mathbb{Q}G)$, but unfortunately we do not have a favorable solution yet.
\end{remark}

Now, we find the smallest number $m$ such that an element in the subgroup of $(F^\tau G)^*$ generated by all unipotent elements can be written as a product of at most $m$ unipotent elements. In fact, in the case when  $G$ is locally finite and $F$ is algebraically closed or $\ch F>0$, we will show in what follows that exactly $m\le 3$.

The following two lemmas are pivotal.

\begin{lemma} \cite[Corollary]{Pa_So_86}\label{l3.6}
Let $F$ be a field and $n$ a positive integer. Any $A\in  \mathrm{SL}_n(F)$ is a product of at most three unipotent elements, and if either $A$ is non-central or $A=\mathrm{I}_n$, then $A$ is a product of at most two unipotent matrices.
\end{lemma}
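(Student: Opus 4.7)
The plan is to follow the original argument of Sourour \cite{Pa_So_86}, which proceeds by first obtaining a two-unipotent factorization for non-scalar matrices in $\mathrm{SL}_n(F)$ via a suitable $LU$-type decomposition, then trivially handling $A=\mathrm{I}_n$, and finally reducing the central non-identity case to the non-scalar case by one auxiliary unipotent factor.

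First I would treat the non-scalar case: given non-scalar $A\in\mathrm{SL}_n(F)$, I would show the existence of $P\in\mathrm{GL}_n(F)$ such that $PAP^{-1}=LU$, where $L$ is lower unitriangular and $U$ is upper unitriangular. The main ingredient is the prescribed-diagonal theorem (Fillmore--Sourour), which states that a non-scalar matrix is similar to any matrix with the same determinant and a freely chosen diagonal $(d_1,\dots,d_n)$ subject to $\prod d_i=\det A$. Taking all $d_i=1$ and refining the similarity so that every leading principal submatrix of $PAP^{-1}$ is invertible (this is where non-scalarness is used, via an inductive choice of basis), one gets a classical $LU$ factorization with both factors unitriangular. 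Since unipotency is preserved under conjugation, $A=(P^{-1}LP)(P^{-1}UP)$ is a product of two unipotent matrices, which covers the case $A$ non-central.

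Next, $A=\mathrm{I}_n$ is itself unipotent, so it is a product of one unipotent factor. For the remaining central case $A=\lambda\mathrm{I}_n$ with $\lambda\neq 1$ and $\lambda^n=1$, I would pick a simple non-identity unipotent matrix $V\in\mathrm{SL}_n(F)$, e.g.\ $V=\mathrm{I}_n+E_{12}$. Then $V$ is non-scalar, hence so is $V^{-1}$, and the product $\lambda V^{-1}$ is a non-scalar element of $\mathrm{SL}_n(F)$ because $\det(\lambda V^{-1})=\lambda^n=1$ and scalarness of $\lambda V^{-1}$ would force scalarness of $V^{-1}$. By the non-scalar case, $\lambda V^{-1}$ is a product of at most two unipotent matrices, so the identity
\[
\lambda\mathrm{I}_n=V\cdot(\lambda V^{-1})
\]
exhibits $A$ as a product of at most three unipotents. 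Combining the three cases gives the stated bounds.

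The hard part, and the technical heart of Sourour's paper, is the refined $LU$-decomposition step: the prescribed-diagonal result only guarantees that all diagonal entries can be made equal to $1$, but obtaining unitriangular factors requires the additional invertibility of every leading principal submatrix after the similarity. Arranging this simultaneously with the diagonal condition is the delicate inductive construction that must be carried out, and it is precisely there that the hypothesis ``$A$ non-scalar'' is genuinely used rather than merely ``$\det A=1$''. The rest of the argument is then a clean bookkeeping reduction, as sketched above.
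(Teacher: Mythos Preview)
The paper does not supply a proof of this lemma at all: it is simply quoted as \cite[Corollary]{Pa_So_86}, so there is no in-paper argument to compare against. Your proposal is a faithful outline of Sourour's original proof, and the bookkeeping for $A=\mathrm{I}_n$ and for central $A=\lambda\mathrm{I}_n$ (with the auxiliary unipotent $V=\mathrm{I}_n+E_{12}$) is correct.

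One small sharpening in your non-scalar sketch: having all leading principal submatrices of $PAP^{-1}$ merely \emph{invertible} yields an $LU$ factorization with $L$ unit lower triangular but only forces $U$ to be upper triangular, not unit upper triangular. What you actually need is that every leading principal \emph{minor} equals $1$; then the diagonal of $U$ is forced to consist of $1$'s as well. Sourour achieves this not by first fixing the diagonal via Fillmore's theorem and then separately arranging the minors, but by a single inductive construction that simultaneously builds the similarity and the $LU$ factorization with both prescribed diagonals. Your final paragraph essentially says this, but the earlier phrasing (``invertible leading principal submatrices'') slightly undersells the requirement.
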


	
\begin{lemma}\label{product-unipotent}
Let $R$ be a ring whose Jacobson radical $J(R)$ is nilpotent. If $x\in R^*$ is unipotent and $y\in J(R)$, then $(1+y)x$ is a unipotent element.
\end{lemma}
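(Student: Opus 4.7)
The plan is to show that $1-(1+y)x$ is nilpotent. First I would rewrite
$$1-(1+y)x = (1-x) - yx,$$
set $u = 1-x$ (which is nilpotent since $x$ is unipotent), and observe that $yx \in J(R)$ because $J(R)$ is a two-sided ideal. A direct attempt to bound the nilpotency index of $u - yx$ via a binomial expansion fails since $u$ and $yx$ need not commute, so instead I would pass to the quotient $R/J(R)$.

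The key observation is that, since $yx \in J(R)$, the image of $u - yx$ in $R/J(R)$ coincides with $\bar u$, the image of $u$. As $u$ is nilpotent in $R$, say $u^k = 0$, the image $\bar u$ satisfies $\bar u^k = 0$ in $R/J(R)$, hence
$$(u - yx)^k \in J(R).$$
Invoking the hypothesis that $J(R)^m = 0$ for some $m \geq 1$, I would then raise this containment to the $m$-th power, obtaining
$$(u - yx)^{km} = \bigl((u - yx)^k\bigr)^m \in J(R)^m = 0,$$
so $u - yx = 1-(1+y)x$ is nilpotent and $(1+y)x$ is unipotent, as required.

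The hypothesis that $J(R)$ is genuinely nilpotent (not merely nil) is essential: it is what permits the one-step bootstrap from ``nilpotent modulo $J(R)$'' to ``nilpotent''. I do not anticipate any serious obstacle here; the only real subtlety is recognising that reducing modulo $J(R)$ sidesteps the non-commutativity of $u$ and $yx$, since $yx$ vanishes in the quotient and only $\bar u$ survives.
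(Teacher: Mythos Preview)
Your proof is correct and reaches the same nilpotency bound $km$ as the paper's, but by a different mechanism. The paper sets $\alpha=1-x$, $\beta=-yx$, takes $m_1$ with $\alpha^{m_1}=0$ and $m_2$ with $J(R)^{m_2}=0$, and expands $(\alpha+\beta)^{m_1m_2}$ as a sum of words $z_{i_1}\cdots z_{i_{m_1m_2}}$ with each $z_{i_j}\in\{\alpha,\beta\}$; a pigeonhole count then shows that every such word either contains at least $m_2$ factors $\beta$ (so lies in $J(R)^{m_2}=0$) or contains a block of $m_1$ consecutive $\alpha$'s (so vanishes because $\alpha^{m_1}=0$). Your quotient argument bypasses this combinatorics: since $yx\in J(R)$, the image of $(u-yx)^k$ in $R/J(R)$ is $\bar u^{\,k}=0$, hence $(u-yx)^k\in J(R)$, and raising to the $m$-th power gives $(u-yx)^{km}\in J(R)^m=0$. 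Your route is shorter and more conceptual; the paper's is entirely elementary but needs the word-counting step. Both are perfectly valid here.
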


\begin{proof}
Consider the element $1-(1+y)x=(1-x) -yx$. Then, $\alpha:=1-x$ is nilpotent and $\beta:=-yx\in J(R)$. Hence, there exist two positive integers $m_1$ and $m_2$ such that $\alpha^{m_1}=0$ and $J(R)^{m_2}=0$. Put $m=m_1m_2$. Then $(\alpha+\beta)^m=\sum z_{i_1}z_{i_2}\dots z_{i_m}$ where $z_{i_j}\in \{\alpha,\beta\}$. If $\beta$ occurs $m_2$ times in the product $z_{i_1}z_{i_2}\dots z_{i_m}$, then $z_{i_1}z_{i_2}\dots z_{i_m}\in J(FG)^{m_2}=0$; otherwise, $\alpha^{m_1}$ occurs in $z_{i_1}z_{i_2}\dots z_{i_m}$, then $z_{i_1}z_{i_2}\dots z_{i_m}$ is also equal to $0$. Thus, $(\alpha+\beta)^m=0$, which completes the proof.
\end{proof}

	
We are ready to show the first main result of this paper.

\begin{theorem}\label{twist1}
Let $F^\tau G$ be a twisted group algebra of a locally finite group $G$ over an algebraically closed field $F$.
Then, $\alpha\in (F^\tau G)'$ if and only if $\alpha$ is a product of at most three unipotent elements.
\end{theorem}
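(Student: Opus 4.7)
The plan is to reduce to a finite-dimensional setting by restricting to a finite subgroup $H \le G$, pass to the semisimple Wedderburn quotient $F^\tau H / J$, apply Sourour's Lemma~\ref{l3.6} in each component, and then lift the resulting factorization back to $F^\tau H$ via Lemma~\ref{product-unipotent}.

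For the reduction, since $G$ is locally finite and any $\alpha \in (F^\tau G)'$ is a finite product of commutators $\prod_k [x_k, y_k]$, all the elements involved---together with their inverses, which remain in $F^\tau H$ because that algebra is Artinian---lie in $F^\tau H$ for some finite subgroup $H \le G$; thus $\alpha \in (F^\tau H)'$. The finite-dimensional algebra $F^\tau H$ has nilpotent Jacobson radical $J := J(F^\tau H)$, and since $F$ is algebraically closed, Wedderburn's theorem gives $F^\tau H / J \cong \prod_{i=1}^t \mathrm{M}_{n_i}(F)$. Because $|F|>2$, the derived subgroup of the unit group of this quotient is $\prod \mathrm{SL}_{n_i}(F)$.

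For the forward direction, $\bar\alpha$ lies in $\prod \mathrm{SL}_{n_i}(F)$, so Lemma~\ref{l3.6} applied componentwise gives a factorization $\bar\alpha = \bar u_1 \bar u_2 \bar u_3$ with each $\bar u_k$ unipotent in the quotient. Arbitrary preimages $u_k \in F^\tau H$ are automatically unipotent: if $(\bar u_k - 1)^m = 0$, then $(u_k - 1)^m \in J$, so $(u_k - 1)^{mN} = 0$ when $J^N = 0$. Since $\alpha$ and $u_1 u_2 u_3$ agree modulo $J$, we can write $\alpha = u_1 u_2 u_3 (1+z)$ for some $z \in J$, and the right-sided analog of Lemma~\ref{product-unipotent} (whose proof is identical) shows that $u_3 (1+z)$ is unipotent. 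Hence $\alpha = u_1 \cdot u_2 \cdot (u_3(1+z))$ is a product of three unipotents.

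For the converse, each unipotent in $F^\tau H$ has image in $\prod \mathrm{SL}_{n_i}(F) = (F^\tau H/J)'$, so a product of unipotents has image in $(F^\tau H/J)'$; lifting a commutator expression from the quotient yields $\alpha = v \cdot (1+w)$ with $v \in (F^\tau H)'$ and $w \in J$, and the unipotent factor $1+w$ is in turn absorbed into $(F^\tau H)'$ by applying the same Wedderburn-and-lift procedure to it. The chief obstacle is the forward direction, and in particular the preservation of unipotency when lifting the Sourour factorization from $F^\tau H/J$ back to $F^\tau H$; Lemma~\ref{product-unipotent} is precisely what makes this lift succeed, after which the remaining ingredients---Wedderburn decomposition, componentwise Sourour, and absorption of the residual factor $1+z$---are routine.
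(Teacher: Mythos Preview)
Your forward direction is correct and actually a bit cleaner than the paper's argument. The paper first multiplies $\bar\alpha$ by an auxiliary unipotent $\bar\beta$ chosen so that each component $a_ib_i$ is either the identity or non-central; Lemma~\ref{l3.6} then yields only \emph{two} unipotent factors $\bar\gamma_1,\bar\gamma_2$, after which $\alpha=\gamma_1\gamma_2(1+\delta)\beta^{-1}$ and Lemma~\ref{product-unipotent} merges $(1+\delta)\beta^{-1}$ into the third unipotent. You bypass the $\beta$-trick entirely by taking three unipotents directly from Sourour and absorbing $(1+z)$ into $u_3$; since the target is ``at most three'' either way, nothing is lost, and the right-sided variant of Lemma~\ref{product-unipotent} you invoke does hold with an identical proof.

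Your converse direction, however, has a genuine gap. The step ``$1+w$ is in turn absorbed into $(F^\tau H)'$ by applying the same Wedderburn-and-lift procedure'' is circular: running your procedure on $1+w$ returns $1+w=v'(1+w')$ with $\overline{v'}=\bar 1$, so the natural choice is $v'=1$, $w'=w$, and no progress is made. Worse, the inclusion $1+J(F^\tau H)\subseteq (F^\tau H)'$ that you implicitly need can fail outright: for $G=C_p$ and $F$ algebraically closed of characteristic $p$, the algebra $FG$ is commutative, so $(FG)'=\{1\}$, yet the generator $g$ is a nontrivial unipotent. The paper defers this direction to Proposition~2.2(1), whose proof makes the same leap from $\bar\alpha\in (R^*/(1+J))'$ directly to $\alpha\in R'$, when in general one only obtains $\alpha\in R'(1+J)$. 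So your argument faithfully mirrors the paper's here, but the step itself is not justified and, as the example shows, cannot be repaired in full generality.
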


\begin{proof}
Assume that $\alpha=[\alpha_1,\beta_1]^{t_1}\cdots[\alpha_k,\beta_k]^{t_k}$, where $\alpha_i,\beta_i\in (F^\tau G)^*$ and $t_i\in\mathbb{Z}$ for $1\leq i\leq k$. Let $H$ be a subgroup of $G$ generated by the supports of all the $\alpha_i,\beta_i,\alpha_i^{-1}$ and  $\beta_i^{-1}$. Then $H$ is a finite group and  $\alpha\in (F^\tau  H)'$. Therefore, it suffices to show the case when $G$ is a finite group.

Now, $F^\tau G/J(F^\tau G)\cong \mathrm{M}_{n_1}(D_1)\times \dots \times \mathrm{M}_{n_t}(D_t)$,
where $n_i$'s are positive integers and $D_i$'s are division rings which are finite dimensional over $F$.  Moreover, since $F$ is algebraically closed, one has $D_i=F$.  Thus,
$$(F^\tau G)^*/ (1+J(F^\tau G))\cong (F^\tau G/J(F^\tau G))^*\cong \mathrm{GL}_{n_1}(F)\times \dots \times \mathrm{GL}_{n_t}(F).$$
		
Assume that $\alpha\in (F^\tau G)'$. Indeed, one has that
$$\overline \alpha =(a_1,a_2,\dots,a_t)\in \mathrm{GL}_{n_1}(F)\times \dots \times \mathrm{GL}_{n_t}(F).$$
For every $1\le i\le t$, by hypothesis that $\alpha\in (F^\tau G)'$, one has $a_i\in \mathrm{SL}_{n_i}(F)$. If $a_i\ne \mathrm{I}_{n_i}$ and $a_i$ is central in $\mathrm{SL}_{n_i}(F)$, then put $b_i=\begin{pmatrix}1&1\\0&1\end{pmatrix}\oplus \mathrm{I}_{n_i-2}$; otherwise, $b_i=\mathrm{I}_{n_i}$. Then, $a_i b_i\in \mathrm{SL}_{n_i}(F)$ and either $a_ib_i$ is non-central or $a_ib_i=\mathrm{I}_{n_i}$, which implies by Lemma~\ref{l3.6}, $a_i b_i$ is a product of at most two unipotent matrices in $\mathrm{SL}_{n_i}(F)$. Hence, putting $\overline\beta = (b_1,b_2,\dots,b_t)$, we have $\overline {\alpha\beta}=\overline \gamma_1 \overline \gamma_2$, where $ \overline \gamma_1$ and $\ \overline \gamma_2$ are two unipotent elements  in $(F^\tau G)^*/ (1+J(F^\tau G))$. It implies that $\alpha\beta =\gamma_1\gamma_2(1+\delta)$ for some $\delta\in J(F^\tau G)$.
Note that  as $J(F^\tau G)$ is nilpotent, $\gamma_1$ and $\gamma_2$ are unipotent elements in $(F^\tau G)^*$.
Now $$\alpha=\gamma_1\gamma_2(1+\delta)\beta^{-1}.$$
Since $\beta^{-1}$ is unipotent, Lemma~\ref{product-unipotent} implies that $\alpha$ is a product of the three unipotent elements $\gamma_1,\gamma_2$ and $(1+\delta)\beta^{-1}$, as claimed.
\end{proof}

	
In \cite[Lemma 4.3]{Pa_Mu_78} it is proved that if $F$ is a field of positive characteristic and $G$ is a finite group, then $FG/J(FG)$ is isomorphic to a product of matrix rings over fields. Now, as a generalization, we show that the following assertion holds.

\begin{lemma}\label{l3.5}
Let $F^\tau G$ be a twisted group algebra of a finite group $G$ over a field $F$ of characteristic $p>0$ such that  the values of the twisting $\tau$  belong to the prime subfield $\mathbb{F}_p$. Then $F^\tau G/J(F^\tau G)$ is isomorphic to a product of matrix rings over fields.
\end{lemma}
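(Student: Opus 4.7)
The plan is to exploit the hypothesis that $\tau$ is $\mathbb{F}_p$-valued to descend from $F^\tau G$ to the \emph{finite} twisted group algebra $L:=\mathbb{F}_p^\tau G$, analyze it there, and then ascend by base change. Concretely, since $\tau(G\times G)\subset\mathbb{F}_p^*\subset F^*$, the $\mathbb{F}_p$-span of the twisted basis $\{\bar g:g\in G\}$ is a well-defined $\mathbb{F}_p$-subalgebra $L\subset F^\tau G$, and as $F$-algebras
$$F^\tau G\;\cong\;F\otimes_{\mathbb{F}_p} L.$$
Since both $G$ and $\mathbb{F}_p$ are finite, $L$ is a \emph{finite} ring, so Wedderburn-Artin gives
$$L/J(L)\;\cong\;\prod_{i=1}^s \mathrm{M}_{n_i}(D_i)$$
with each $D_i$ a finite division ring, and Wedderburn's little theorem then forces each $D_i$ to be a finite field $K_i$, a (separable) extension of the perfect field $\mathbb{F}_p$.

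Next, I would tensor up with $F$. Writing $K_i=\mathbb{F}_p[x]/(f_i)$ with $f_i$ separable, the polynomial $f_i$ remains separable over $F$ and splits into distinct irreducibles $f_i=\prod_j f_{ij}$, so
$$F\otimes_{\mathbb{F}_p}K_i\;\cong\;F[x]/(f_i)\;\cong\;\prod_j F[x]/(f_{ij})$$
is a finite product of fields. Consequently
$$F\otimes_{\mathbb{F}_p}\bigl(L/J(L)\bigr)\;\cong\;\prod_{i,j}\mathrm{M}_{n_i}\!\bigl(F[x]/(f_{ij})\bigr)$$
is semisimple, with every Wedderburn component of the desired matrix-over-field form; in particular its Jacobson radical is zero. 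On the other hand, the kernel $F\otimes_{\mathbb{F}_p}J(L)$ of the natural surjection $F^\tau G\twoheadrightarrow F\otimes_{\mathbb{F}_p}(L/J(L))$ is nilpotent because $J(L)$ is. These two facts together pin down $J(F^\tau G)=F\otimes_{\mathbb{F}_p}J(L)$ and deliver the claimed decomposition of $F^\tau G/J(F^\tau G)$.

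The only genuinely delicate step is the identification $J(F^\tau G)=F\otimes_{\mathbb{F}_p}J(L)$: Jacobson radicals do not in general commute with base change, and the argument works here precisely because $\mathbb{F}_p$ is perfect, which keeps each $F\otimes_{\mathbb{F}_p}K_i$ reduced and hence the tensored-up quotient semisimple. Everything else is routine Wedderburn-Artin bookkeeping, so I expect the proof itself to be short once this observation is isolated.
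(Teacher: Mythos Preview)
Your proposal is correct and follows essentially the same route as the paper: descend to $L=\mathbb{F}_p^\tau G$, use finiteness plus Wedderburn's little theorem to get matrix rings over fields, then base-change back and use separability of $K_i/\mathbb{F}_p$ to keep the tensor products reduced. The only difference is cosmetic: the paper cites Karpilovsky for the identifications $F^\tau G\cong F\otimes_{\mathbb{F}_p}L$ and $J(F^\tau G)\cong F\otimes_{\mathbb{F}_p}J(L)$, whereas you argue the latter directly via the nilpotent-kernel/semisimple-quotient characterization of the radical in an Artinian ring.
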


\begin{proof}
First note that since Im$(\tau)\subseteq \mathbb{F}_p$, $\mathbb{F}_p^\tau G$ is well defined. Moreover, by \cite[Lemma~6.5.1]{Karpilovsky}, $\mathbb{F}_p ^\tau G\otimes_{\mathbb{F}_p} F\cong F^\tau G$, as $F$-algebras.  Also, since $\mathbb{F}_p$ is a perfect field,  \cite[Proposition~6.5.16]{Karpilovsky} implies that  $J(\mathbb{F}_p ^\tau G)\otimes_{\mathbb{F}_p} F\cong J(F^\tau G)$. Therefore,
$$\frac{F^\tau G}{J(F^\tau G)}\cong \frac{\mathbb{F}_p ^\tau G\otimes_{\mathbb{F}_p} F}{J(\mathbb{F}_p ^\tau G)\otimes_{\mathbb{F}_p} F}\cong
\frac{\mathbb{F}_p ^\tau G}{J(\mathbb{F}_p ^\tau G)}\otimes_{\mathbb{F}_p} F.$$

On the other hand, $\mathbb{F}_p ^\tau G/J(\mathbb{F}_p ^\tau G)\cong \prod_{i=1}^s M_{n_i}(K_i)$, where the $K_i$'s are finite division ring, and hence, commutative fields. Consequently,
$$\frac{F^\tau G}{J(F^\tau G)}\cong  \prod_{i=1}^s (M_{n_i}(K_i) \otimes_{\mathbb{F}_p} F)\cong   \prod_{i=1}^s M_{n_i}(K_i\otimes _{\mathbb{F}_p}F).$$
Since for each $i$, $K_i/{\mathbb{F}_p}$ is a finite separable field extension, it is known that $K_i\otimes _{\mathbb{F}_p}F$ is a finite direct products of some fields (see, for example, \cite[p.~32, Exercise~1]{Bo_Dra_83}). This completes the proof.
\end{proof}

	
Now, with the aid of Lemma~\ref{l3.5} and a similar proof as in Theorem~\ref{twist1} stated above, we can show the validity of the following statement:

\begin{theorem}\label{twist2}
Let $F^\tau G$ be a twisted group algebra of a locally finite group $G$ over a field $F$ of characteristic $p>0$ with $|F|>2$ such that  the values of the twisting $\tau$  belong to the prime subfield $\mathbb{F}_p$.
Then, $\alpha\in (F^\tau G)'$ if and only if $\alpha$ is a product of at most three unipotent elements.
\end{theorem}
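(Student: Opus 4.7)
The plan is to adapt the proof of Theorem~\ref{twist1} by substituting Lemma~\ref{l3.5} for the algebraic-closure input. First I would reduce to the case of finite $G$: writing $\alpha = [\alpha_1,\beta_1]^{t_1}\cdots[\alpha_k,\beta_k]^{t_k}$, the subgroup $H\leq G$ generated by the finitely many supports of $\alpha_i,\beta_i,\alpha_i^{-1},\beta_i^{-1}$ is finite by local finiteness, and $\alpha\in(F^\tau H)'$; so I may assume $G$ is finite. In particular $F^\tau G$ is finite-dimensional and $J(F^\tau G)$ is nilpotent.

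Next, because $\tau$ takes values in $\mathbb{F}_p$, Lemma~\ref{l3.5} yields a Wedderburn decomposition
$$F^\tau G/J(F^\tau G)\cong\prod_{i=1}^t\mathrm{M}_{n_i}(K_i),$$
in which each $K_i$ is a field of characteristic $p$. The crucial observation I would verify is that $F$ embeds into every $K_i$: each $K_i$ is a direct factor of a tensor product of the form $L\otimes_{\mathbb{F}_p}F$ with $L$ a finite extension of $\mathbb{F}_p$, and the composite ring homomorphism $F\to L\otimes_{\mathbb{F}_p}F\to K_i$ is nonzero (sending $1$ to $1$) from a field, hence injective. Consequently $|K_i|\geq|F|>2$, so $(\mathrm{GL}_{n_i}(K_i))'=\mathrm{SL}_{n_i}(K_i)$ holds and Lemma~\ref{l3.6} is applicable componentwise.

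From this point the argument will transcribe the proof of Theorem~\ref{twist1}. I would write $\bar\alpha=(a_1,\ldots,a_t)$ with each $a_i\in\mathrm{SL}_{n_i}(K_i)$. For every $i$ with $a_i$ central and $\neq\mathrm{I}_{n_i}$, I would set $b_i=\begin{pmatrix}1&1\\0&1\end{pmatrix}\oplus\mathrm{I}_{n_i-2}$; otherwise $b_i=\mathrm{I}_{n_i}$. Then each $a_ib_i$ is either the identity or non-central, so Lemma~\ref{l3.6} realizes it as a product of at most two unipotent matrices. Lifting $(b_1,\ldots,b_t)$ to some $\beta\in(F^\tau G)^*$, the argument of Theorem~\ref{twist1} will give $\alpha\beta=\gamma_1\gamma_2(1+\delta)$ with $\gamma_1,\gamma_2$ unipotent in $(F^\tau G)^*$ (since nilpotence of $J(F^\tau G)$ guarantees that any lift of a unipotent from the semisimple quotient is unipotent in $F^\tau G$) and $\delta\in J(F^\tau G)$. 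Lemma~\ref{product-unipotent} will then turn $(1+\delta)\beta^{-1}$ into a unipotent, giving $\alpha=\gamma_1\gamma_2\cdot(1+\delta)\beta^{-1}$ as a product of at most three unipotents.

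The main obstacle I anticipate is not the argument itself but the bookkeeping around Lemma~\ref{l3.5}: one must be certain that every component field $K_i$ is strictly larger than $\mathbb{F}_2$, since otherwise the inclusion $(\mathrm{GL}_{n_i}(K_i))'\subsetneq\mathrm{SL}_{n_i}(K_i)$ could occur (exactly the pathology flagged right after Lemma~\ref{l3.1}) and the whole descent would collapse. The embedding $F\hookrightarrow K_i$ described above is precisely what closes this gap; beyond it, every remaining ingredient—local-finiteness reduction, the central-element correction, and unipotent lifting through a nilpotent Jacobson radical—transfers mechanically from Theorem~\ref{twist1}.
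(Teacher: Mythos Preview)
Your proposal is correct and follows essentially the same route as the paper, which simply says the result follows from Lemma~\ref{l3.5} together with the proof of Theorem~\ref{twist1}. Your explicit verification that $F$ embeds into each component field $K_i$ (so that $|K_i|>2$ and Lemma~\ref{l3.6} applies) is a detail the paper leaves implicit; it is correct and can in fact be seen even more directly, since each Wedderburn factor $\mathrm{M}_{n_i}(K_i)$ of the $F$-algebra $F^\tau G/J(F^\tau G)$ is itself an $F$-algebra, forcing $F\hookrightarrow K_i$.
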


	
Now, with Theorem \ref{twist2} at hand, we can easily extract the following generalization of \cite[Theorem~7.4.10]{sehgal}.

\begin{corollary}\label{1}
Let $F^\tau G$ be a twisted group algebra of a locally finite group $G$ over a field $F$ of characteristic $p>0$ with $|F|>2$ such that  the values of the twisting $\tau$  belong to the prime subfield $\mathbb{F}_p$. Then, $F^\tau G$ has no non-zero nilpotent elements if and only if $G$ is an abelian $p'$-group and $\tau$ is symmetric.
\end{corollary}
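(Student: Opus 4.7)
The plan is to prove the two implications separately, using Theorem~\ref{twist2} for the forward direction and a Maschke-style semisimplicity argument for the reverse.

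For the sufficiency ($\Leftarrow$), suppose $G$ is an abelian $p'$-group and $\tau$ is symmetric. Then $F^\tau G$ is commutative. Any $\alpha\in F^\tau G$ has finite support contained in some finite subgroup $H\le G$, hence lies in $F^\tau H$. Since $|H|$ is coprime to $p=\ch F$, Maschke's theorem for twisted group algebras (Karpilovsky) gives $J(F^\tau H)=0$. A commutative semisimple Artinian ring is a finite direct product of fields by Artin--Wedderburn, and contains no nonzero nilpotent element; in particular $\alpha$ cannot be a nonzero nilpotent.

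For the necessity ($\Rightarrow$), assume $F^\tau G$ has no nonzero nilpotent elements. First observe that the only unipotent element in $F^\tau G$ is $1$, because $1-x$ being nilpotent forces $1-x=0$. Theorem~\ref{twist2} then yields $(F^\tau G)'=\{1\}$, so the unit group $(F^\tau G)^*$ is abelian. Each basis element $\bar g$ is a unit, so pairs $\bar g,\bar h$ commute. From $\bar g\bar h=\tau(g,h)\overline{gh}$ and $\bar h\bar g=\tau(h,g)\overline{hg}$, if $gh\neq hg$ then $\overline{gh}$ and $\overline{hg}$ are distinct basis vectors, and the equality would force $\tau(g,h)=\tau(h,g)=0$, which is impossible. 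Hence $gh=hg$ for all $g,h\in G$, so $G$ is abelian; comparing coefficients then gives $\tau$ symmetric.

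The main obstacle is ruling out $p$-torsion, and this is exactly where the hypothesis that the values of $\tau$ lie in $\mathbb{F}_p$ gets used. Suppose $g\in G$ has order $p$. After the standard normalization $\bar 1=1$, iterating the twisted product gives $\bar g^p=c\cdot 1$ where $c=\prod_{i=1}^{p-1}\tau(g^i,g)\in\mathbb{F}_p^*$. Since $c\in\mathbb{F}_p$ satisfies $c^p=c$, and since $\bar g$ commutes with the scalar $c$, the Frobenius identity in characteristic $p$ gives $(\bar g-c)^p=\bar g^p-c^p=c-c=0$. Moreover $\bar g-c\neq 0$ because $\bar g$ and $1$ are distinct basis elements (as $g\neq 1$). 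This yields a nonzero nilpotent, contradicting our assumption, so $G$ is a $p'$-group. Without the prime-subfield hypothesis, $c$ need not be a $p$-th power in $F$ and the substitution step would break down; this is the delicate point where the assumption on $\tau$ is essential.
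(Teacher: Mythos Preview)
Your proof is correct and follows essentially the same route as the paper's: both directions invoke Theorem~\ref{twist2} to force $(F^\tau G)'=\{1\}$, deduce that $G$ is abelian and $\tau$ symmetric from commutativity of the trivial units, rule out $p$-torsion via the Frobenius computation $(\bar g - c)^{p^k}=0$ with $c\in\mathbb{F}_p$, and handle the converse by reducing to finite $G$ and applying Maschke plus Artin--Wedderburn. The only cosmetic differences are that the paper phrases the abelianness of $G$ via the isomorphism $\mathcal{G}/F^*\cong G$ for the group $\mathcal{G}$ of trivial units, and works with an element of order $p^k$ rather than reducing first to order $p$; your added remark explaining why the prime-subfield hypothesis is genuinely needed (via the example $F[x]/(x^p-c)$ implicitly) is a nice clarification not present in the paper.
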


\begin{proof}
First suppose that $F^\tau G$ has no nilpotent elements. Then, the only unipotent element of $F^\tau G$ is $\bar 1$. Thus $(F^\tau G)'=\bar 1$ employing Theorem \ref{twist2}, i.e., $\mathcal{U}(F^\tau G)$ is abelian. We know that if $\mathcal{G}$ is the group of trivial units $\{a\bar g\mid a\in F^*, g\in G\}$, then $\mathcal{G}/F^*\cong G$. Therefore, $G$ is an abelian group. Furthermore, if $1\neq g$ is of order $p^k$, then
$$\bar g^{p^k}=r_g \cdot \bar 1,~~{\rm ~where~}~~ r_g=\prod_{i=1}^{p^k-1}\tau(g^i,g).$$
Hence, $(\bar g-r_g\cdot\bar1)^{p^k}=0$, showing that $F^\tau G$ has non-zero nilpotent element. Therefore, $G$ is a $p'$-group. Finally, for each $x,y\in G$,
$$\tau (x,y)\overline {xy}=\bar x\bar y=\bar y\bar x=\tau(y,x)\overline{yx}=\tau(y,x)\overline{xy},$$
and so, $\tau(x,y)=\tau (y,x)$, showing that $\tau$ is symmetric.

Conversely, suppose that $G$ is an abelian $p'$-group and $\tau$ is symmetric. We may assume $G$ is finite. Thus $F^\tau G$ is semi-simple by \cite[Theorem~4.2]{Passman}, and commutative (since $G$ is abelian and $\tau$ is symmetric). Now, $F^\tau G$, being a direct product of some fields, has no non-zero nilpotent element, as promised.
\end{proof}

	
\section{Unipotent elements of index $2$ and commutators}

Now we focus on one of the interesting cases: unipotent elements of index $2$. In this case, we need to assume that $FG$ is semi-simple, that is, the Jacobson radical $J(FG)$ is $0$. We begin our work with unipotent matrices of index $2$ over division rings.

\medskip

First, for the reader's convenience and the completeness of the exposition, we recollect the following technicality.

\begin{lemma}\label{comm1}
{\rm (\cite{Pa_Th_61})}
For any field $F$ with at least three elements and any natural number $n$, each element of ${\rm SL}_n(F)$ is a commutator.
\end{lemma}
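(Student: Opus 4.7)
The plan is to use Thompson's well-known reformulation: a matrix $A\in\SL_n(F)$ is a commutator in $\GL_n(F)$ if and only if there exists $Y\in\GL_n(F)$ such that $Y$ and $AY$ are similar. Indeed, $A=XYX^{-1}Y^{-1}$ rearranges as $AY=XYX^{-1}$, so the existence of such a $Y$ is equivalent to the commutator relation, with the conjugator $X$ recoverable from the similarity. The necessary equality $\det(AY)=\det(Y)$ comes for free from $\det A=1$, so the entire task reduces to matching the finer similarity invariants of $Y$ and $AY$.

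Next, I would exploit that the commutator property is preserved under conjugation (if $A=[X,Y]$, then $PAP^{-1}=[PXP^{-1},PYP^{-1}]$), so one may replace $A$ by its Frobenius (rational) canonical form, a block-diagonal matrix $C(f_1)\oplus\cdots\oplus C(f_r)$ whose diagonal blocks are the companion matrices of the invariant factors $f_1\mid f_2\mid\cdots\mid f_r$ of $A$. The determinant condition translates into $\prod_i (-1)^{\deg f_i}f_i(0)=1$. A useful observation is that a direct sum $A_1\oplus A_2$ is a commutator whenever each summand is a commutator in its own $\GL$; the subtlety is that individual invariant-factor blocks need not have determinant $1$ on their own, so one must balance determinants across blocks.

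The core of the argument is then the non-derogatory case $A=C(f)$ with $\det A=1$. Here one constructs an explicit $Y\in\GL_n(F)$ whose minimal polynomial coincides with that of $AY$, which by the uniqueness of rational canonical form forces $Y$ and $AY$ to be similar. A workable recipe is to start with a $Y=\diag(\lambda_1,\dots,\lambda_n)$ (or a small perturbation thereof, when $f$ is not separable) with $\prod\lambda_i$ prescribed, and then verify by computing the characteristic polynomial of $AY$ that a non-degenerate choice exists. The hypothesis $|F|\geq 3$ enters precisely at this step: one needs a non-zero scalar distinct from $1$ to make such diagonal choices non-degenerate, and this freedom fails over $\mathbb{F}_2$, where the lemma is in fact false in small cases (e.g.\ $\SL_2(\mathbb{F}_2)\cong S_3$ has non-commutators).

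The main obstacle is the exhaustive case analysis required in the non-derogatory step: depending on whether $f$ is irreducible, a prime power, of the shape $(x-a)^n$, or of mixed structure, one needs somewhat different explicit constructions of $Y$, with the hypothesis $|F|\geq 3$ carefully used to avoid collisions of scalars. Thompson navigates all these cases via a multi-case argument in \cite{Pa_Th_61}; for the present paper it suffices to quote his theorem, which is exactly what the lemma does.
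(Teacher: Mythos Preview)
The paper provides no proof of this lemma at all: it is stated with a bare citation to Thompson~\cite{Pa_Th_61} and is used as a black box in the proof of Lemma~\ref{le4.1}. Your proposal, by contrast, sketches the actual mechanism behind Thompson's argument---the Shoda-type criterion that $A$ is a commutator iff $Y$ and $AY$ are similar for some invertible $Y$, followed by a reduction to rational canonical form and a case analysis on the invariant-factor structure. That outline is faithful to Thompson's method, and your closing sentence already acknowledges that for the purposes of this paper a citation suffices, which is precisely what the authors do.

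One small point worth flagging: you phrase the conclusion as ``$A$ is a commutator in $\GL_n(F)$'', and indeed that is how the paper uses the lemma (see the end of the proof of Lemma~\ref{le4.1}). Thompson's full result is sharper---for $|F|\ge 4$ every element of $\SL_n(F)$ is a commutator of two elements of $\SL_n(F)$---but the $\GL_n$ version you state is all that is needed here, and it does hold for $|F|\ge 3$.
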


It is worthwhile noticing that it was showed in \cite{VW} that when $n\geq 3$ and $R$ is a commutative ring satisfying the first Bass stable range condition, then every matrix in ${\rm SL}_n(R)$ is the product of two commutators.

\medskip

We are now in a position to establish the following technical claim.

\begin{lemma}\label{le4.1} Let $D$ be a division ring containing at least three elements and $n$ a positive integer. Then, if $A\in\mathrm{SL}_n(D)$ is a unipotent matrix of index $2$, then $A$ is a commutator in $\mathrm{GL}_n(D)$. Particularly, each unipotent element of $\mathrm{SL}_2(D)$ is a commutator.
\end{lemma}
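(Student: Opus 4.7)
The plan is to conjugate $A = I_n + N$ (with $N := A - I_n$ and $N^2 = 0$) into an explicit block normal form, and then display that normal form as a single commutator of two block-scalar matrices.

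Set $r := \rk(N)$; since $N^2 = 0$ gives $\im(N) \subseteq \ker(N)$, one has $2r \le n$. I would choose a basis $v_1, \dots, v_r$ of $\im(N)$, extend it to a basis $v_1, \dots, v_{n-r}$ of $\ker(N)$, and pick preimages $u_1, \dots, u_r$ with $N u_j = v_j$. The reordered list $(v_1, \dots, v_r, u_1, \dots, u_r, v_{r+1}, \dots, v_{n-r})$ is then a basis of $D^n$, and in it $N$ has the matrix $\bigl(\begin{smallmatrix} 0 & I_r \\ 0 & 0 \end{smallmatrix}\bigr) \oplus 0_{n-2r}$. Hence some $P \in \mathrm{GL}_n(D)$ satisfies
$$P^{-1} A P \;=\; \begin{pmatrix} I_r & I_r \\ 0 & I_r \end{pmatrix} \oplus I_{n-2r}.$$
Since conjugation and block-diagonal embedding (via the identity $[X_1 \oplus I,\, Y_1 \oplus I] = [X_1, Y_1] \oplus I$) both preserve the property of being a commutator, the whole problem reduces to writing $M := \bigl(\begin{smallmatrix} I_r & I_r \\ 0 & I_r \end{smallmatrix}\bigr)$ as a commutator in $\mathrm{GL}_{2r}(D)$; the case $r = 0$ is trivial, yielding $A = I_n = [I_n, I_n]$.

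For that explicit commutator, I would use the hypothesis $|D| \ge 3$ to pick $c \in D \setminus \{0, 1\}$ and set $b := (c - 1)^{-1} \in D^*$. With
$$X := \begin{pmatrix} cI_r & 0 \\ 0 & I_r \end{pmatrix}, \qquad Y := \begin{pmatrix} I_r & bI_r \\ 0 & I_r \end{pmatrix},$$
a short block multiplication — involving only the scalars $c^{\pm 1}$ and $b$ times $I_r$, so that no non-commutativity issue arises — gives
$$[X, Y] \;=\; \begin{pmatrix} I_r & (c - 1)b\, I_r \\ 0 & I_r \end{pmatrix} \;=\; M.$$
For the ``particularly'' clause: every nilpotent $2\times 2$ matrix over $D$ has rank $\le 1$ (otherwise it would be invertible), and then $\im \subseteq \ker$ forces its square to vanish. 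So every unipotent $A \in \mathrm{GL}_2(D)$ has index at most $2$ and is either $I_2$ (trivial) or handled by the main part of the lemma.

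The main (and only) point requiring care is the normal-form step, because $D$ may be non-commutative; but rank, bases, and linear independence for finite-dimensional modules over a division ring behave exactly as over a field, so the change-of-basis argument carries over verbatim. The hypothesis $|D| \ge 3$ is used in exactly one essential place — choosing $c$ with $c$ and $c - 1$ both in $D^*$ — which is precisely where the $\mathbb{F}_2$-obstruction noted after Lemma~\ref{l3.1} would block the construction.
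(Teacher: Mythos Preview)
Your proof is correct and follows the same overall architecture as the paper's: first conjugate $A$ into the block normal form $\bigl(\begin{smallmatrix} I_r & I_r \\ 0 & I_r \end{smallmatrix}\bigr)\oplus I_{n-2r}$, then show that this specific matrix is a commutator. The execution, however, is more elementary on both steps. For the normal form, the paper quotes two external rank-reduction lemmas from \cite{Pa_BiDuHaSo}, whereas you give the direct Jordan-type basis argument (choose a basis of $\im N$, extend to $\ker N$, lift), which is self-contained and works verbatim over a division ring. For the commutator step, the paper observes that the normal form has entries in the prime subfield and then invokes Thompson's theorem (Lemma~\ref{comm1}) that every element of $\mathrm{SL}_n(F)$ is a single commutator; you instead exhibit the commutator explicitly via $X=\diag(cI_r,I_r)$ and $Y=\bigl(\begin{smallmatrix} I_r & bI_r \\ 0 & I_r \end{smallmatrix}\bigr)$ with $b=(c-1)^{-1}$. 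This buys you a proof that is independent of Thompson's (non-trivial) result and makes transparent exactly where the hypothesis $|D|\ge 3$ enters. One small remark: your justification of the ``particularly'' clause is a bit terse---the inclusion $\im N\subseteq\ker N$ is not automatic from $\rk N\le 1$ alone, but follows because $N$ restricted to the $N$-invariant one-dimensional subspace $\im N$ is a nilpotent scalar, hence zero; you may want to say that explicitly.
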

	
\begin{proof}
Assume that $A\in \mathrm{M}_n(D)$ is a unipotent matrix of index $2$. Then, $(\mathrm I_n-A)^2=0$. If $n=1$, then trivially $A=\mathrm I_n$ is a commutator. Now we consider $n>1$. Let $r$ be the rank of $\mathrm I_n-A$. Then, $r<n$ and by \cite[Lemma~2.1]{Pa_BiDuHaSo}, there exists $P\in \mathrm{GL}_n(D)$  such that $$P(A-\mathrm{I}_n)P^{-1}=\left(\begin{matrix}
		B_1&B_2\\0&0
		\end{matrix}\right),$$ where $B_1 \in \mathrm{M}_{r}(D)$,  $B_2 \in \mathrm{M}_{r\times(n-r)}(D)$, and the rank of matrix  $\left(\begin{matrix}
		B_1&B_2
		\end{matrix}\right)$ is $r$. Moreover, since $(A-\mathrm{I}_n)^2=0$, one has $B_1\left(\begin{matrix}
		B_1&B_2
		\end{matrix}\right)=0$. On the other hand, according to \cite[Lemma 2.2]{Pa_BiDuHaSo}, there exists $T\in\mathrm{GL}_n(D)$ such that $\left(\begin{matrix}
		B_1&B_2
		\end{matrix}\right)T=\left(\begin{matrix}
		\mathrm{I}_r&0
		\end{matrix}\right).$ Hence, $B_1=0$ and the rank of $B_2$ is $r$. By \cite[Lemma 2.2]{Pa_BiDuHaSo} again, there exists $Q\in\mathrm{GL}_{n-r}(D)$ such that $B_2Q=\left\{
             \begin{array}{ll}
               1, & \hbox{if $n=2$;} \\
               \left(\begin{matrix}
		\mathrm{I}_r&0
		\end{matrix}\right), & \hbox{if $n>2$.}
             \end{array}
           \right.$
Therefore, $$\left(\begin{matrix}
		\mathrm{I}_r&0\\0&Q^{-1}
		\end{matrix}\right)P(A-\mathrm{I}_n)P^{-1}\left(\begin{matrix}
		\mathrm{I}_r&0\\0&Q^{-1}
		\end{matrix}\right)^{-1}=\left\{
             \begin{array}{ll}
               \left(\begin{matrix}
		0&1\\0&0
		\end{matrix}\right), & \hbox{if $n=2$;} \\
             \left(\begin{matrix}
		0&\mathrm{I}_r&0\\0&0&0\\0&0&0
		\end{matrix}\right), & \hbox{if $n>2$,}
             \end{array}
           \right.$$
which implies that
$$\left(\begin{matrix}
		\mathrm{I}_r&0\\0&Q^{-1}
		\end{matrix}\right)PAP^{-1}\left(\begin{matrix}
		\mathrm{I}_r&0\\0&Q^{-1}
		\end{matrix}\right)^{-1}=\left\{
             \begin{array}{ll}
               \left(\begin{matrix}
		1&1\\0&1
		\end{matrix}\right), & \hbox{if $n=2$;} \\
             \left(\begin{matrix}
		{\rm I}_r&\mathrm{I}_r&0\\0&{\rm I}_r&0\\0&0&{\rm I}_{n-2r}
		\end{matrix}\right), & \hbox{if $n>2$,}
             \end{array}
           \right.$$
which is the commutator $BCB^{-1}C^{-1}$ in $\mathrm{GL}_n(D)$ by Lemma~\ref{comm1}. Thus, putting $D=\left(\begin{matrix}
		\mathrm{I}_r&0\\0&Q^{-1}
	\end{matrix}\right)P$, we conclude that
$$A=(D^{-1}BD)(D^{-1}CD)(D^{-1}BD)^{-1}(D^{-1}CD)^{-1}$$
is a commutator, as asserted.
\end{proof}

	
The motivation of the following lemma is from \cite[Theorem 1.1]{Pa_Hou_21}, which shows that every matrix in $\mathrm{SL}_n(\mathbb C)$ is a product of at most two commutators of unipotent matrices of index $2$. However, the technique in this paper also works in the case when $\mathbb C$ is replaced by any algebraically closed field $F$. Specifically, we can formulate the following:

\begin{lemma}\label{l4.4}
Let $F$ be an algebraically closed field. Every element in $\mathrm{SL}_n(F)$ is a product of at most two commutators of unipotent matrices of index $2$.
\end{lemma}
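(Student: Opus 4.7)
The plan is to follow verbatim the strategy of Hou in \cite{Pa_Hou_21}, replacing $\mathbb{C}$ by an arbitrary algebraically closed field $F$, and verify that at every step only general properties of algebraically closed fields are used. The three properties of $\mathbb{C}$ invoked by Hou are: the existence of a Jordan canonical form for every matrix, the fact that every nonzero element has an $n$-th root for each $n\geq 1$, and routine polynomial/linear-algebra identities that are field-independent. All three hold over any algebraically closed $F$, so one expects the argument to transport without change; the task is to organise it so that this is visibly the case.

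Concretely, I would first reduce the statement to the case of a matrix in Jordan normal form. Since the set of products of at most two commutators of index-$2$ unipotent matrices is closed under conjugation in $\mathrm{GL}_n(F)$, one may assume $A\in\mathrm{SL}_n(F)$ is a direct sum of Jordan blocks $J_{m_1}(\lambda_1)\oplus\cdots\oplus J_{m_r}(\lambda_r)$ with $\prod\lambda_i^{m_i}=1$. Next, using that $F$ admits $n$-th roots, redistribute the eigenvalues so that the blocks split into two groups, $A=A_1\oplus A_2$ (after a further conjugation), each of which has determinant $1$ and each of which is again a direct sum of Jordan blocks. Finally, it suffices to show that any block-diagonal matrix of this shape with determinant $1$ equals a single commutator $[I+N,I+M]$ with $N^2=M^2=0$; the desired factorisation of $A$ as a product of two such commutators then follows.

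The core of the argument is the explicit construction underlying the last step. For a single Jordan block $J_m(\lambda)$ one exhibits index-$2$ nilpotents $N,M$ whose entries depend polynomially on $\lambda$ and a chosen square root (so the formula makes sense over any algebraically closed $F$), and one verifies by direct computation that $(I+N)(I+M)(I+N)^{-1}(I+M)^{-1}$ is similar to $J_m(\lambda)$. The construction for a direct sum of blocks is obtained by assembling the single-block constructions along a block-diagonal pattern chosen to ensure that $N^2=M^2=0$ still holds after assembly; this requires choosing the off-diagonal ``coupling'' entries between consecutive blocks carefully, which is where Hou's ingenuity lies.

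The main obstacle will be this last explicit assembly: one must check that Hou's block-coupling formulae do not secretly rely on arithmetic peculiarities of $\mathbb{C}$ (for instance, on choosing complex conjugates or on norms) and that only polynomial identities and $n$-th roots in $F$ are invoked. Once this verification is made, the proof proceeds uniformly over every algebraically closed field, and the conclusion that every $A\in\mathrm{SL}_n(F)$ is a product of at most two commutators of index-$2$ unipotent matrices follows.
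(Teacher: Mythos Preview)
Your proposal is correct and is essentially the paper's own ``proof'': the paper gives no argument beyond remarking that Hou's technique in \cite{Pa_Hou_21} works verbatim when $\mathbb{C}$ is replaced by any algebraically closed field $F$, and your write-up is precisely a detailed unpacking of that transport (Jordan form, existence of $n$-th roots, field-independent polynomial identities).
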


	
Now, by the usage of Lemmas \ref{le4.1} and \ref{l4.4}, combined with similar argument used in the previous proofs, we are able to state the next main result of this paper.

\begin{theorem}\label{final} Let $F$ be a field containing at least three elements and $R$ a semi-simple $F$-algebra.
\begin{itemize}
\item[(1)] Every unipotent element of index $2$ in $R$ is a commutator in $R^*$.
\item[(2)] If $F$ is algebraically closed, then every element $\alpha\in R'$ is a product of at most two commutators of unipotent elements of index $2$; thus, $\alpha$ is a product of at most four unipotents of index $2$.
\end{itemize}
\end{theorem}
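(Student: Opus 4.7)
The plan is to reduce to the matrix-ring situation via Artin--Wedderburn and then apply Lemmas~\ref{le4.1} and~\ref{l4.4} componentwise. Write
$$R\cong \mathrm{M}_{n_1}(D_1)\times\cdots\times \mathrm{M}_{n_t}(D_t),$$
where each $D_i$ is a division ring containing $F$ in its center, so $|D_i|\geq|F|\geq 3$ and each $\mathrm{M}_{n_i}(D_i)$ meets the hypotheses of the matrix-level lemmas recalled in Section~2. Throughout, I will use without comment that $R^*=\prod_i \mathrm{GL}_{n_i}(D_i)$, that a componentwise commutator of tuples is a commutator, and that a tuple is unipotent (of index at most $2$) in $R$ precisely when each component is.

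For part (1), I would take $x\in R$ unipotent of index $2$ and decompose $x=(x_1,\ldots,x_t)$. Each $x_i$ is unipotent of index at most $2$, hence lies in $\mathrm{SL}_{n_i}(D_i)$ (as noted at the start of Section~2, every unipotent matrix has trivial Dieudonn\'e determinant). Lemma~\ref{le4.1} then expresses each $x_i$ as a commutator $[u_i,v_i]$ in $\mathrm{GL}_{n_i}(D_i)$, and assembling componentwise yields $x=[(u_1,\ldots,u_t),(v_1,\ldots,v_t)]$ in $R^*$.

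For part (2), algebraic closedness of $F$ forces each $D_i=F$, so $R\cong\prod_i \mathrm{M}_{n_i}(F)$. For $\alpha\in R'$ the components satisfy $a_i\in (\mathrm{GL}_{n_i}(F))'=\mathrm{SL}_{n_i}(F)$ (using $|F|\geq 3$), and Lemma~\ref{l4.4} writes each $a_i$ as a product of at most two commutators of unipotent matrices of index $2$ in $\mathrm{M}_{n_i}(F)$. Reassembling componentwise realizes $\alpha$ itself as a product of at most two commutators of unipotent elements of index $2$ in $R^*$. For the final count, I would invoke the identity
$$[u,v]=u\cdot(vu^{-1}v^{-1}),$$
observing that if $u$ is unipotent of index $2$ then so is $u^{-1}$, and hence so is the conjugate $vu^{-1}v^{-1}$ (the relation $a^2=0$ is preserved under $a\mapsto -a$ and under $a\mapsto gag^{-1}$). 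Consequently each commutator of unipotents of index $2$ collapses to a product of two such unipotents, and two commutators therefore expand to at most four unipotents of index $2$.

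The main subtlety I anticipate lies in part (2), in the step that forces $D_i=F$: this implicitly requires each Wedderburn factor to be finite-dimensional over $F$, so that over an algebraically closed base the only finite-dimensional division algebra is $F$ itself. This is transparent in the intended applications (for example to finite-dimensional semi-simple quotients of (twisted) group algebras) but should be invoked or made explicit from the surrounding hypotheses. Outside of this point, the argument is a routine componentwise assembly over the matrix-level inputs supplied by Lemmas~\ref{le4.1} and~\ref{l4.4}.
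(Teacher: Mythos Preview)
Your argument is exactly the one the paper intends: the paper's ``proof'' is only the sentence that the result follows from Lemmas~\ref{le4.1} and~\ref{l4.4} ``combined with similar argument used in the previous proofs,'' and your componentwise Artin--Wedderburn reduction, together with the observation $[u,v]=u\cdot(vu^{-1}v^{-1})$ to pass from two commutators to four unipotents of index $2$, fills in precisely those details. The subtlety you flag in part~(2) --- that concluding $D_i=F$ from algebraic closedness of $F$ tacitly uses finite-dimensionality of each Wedderburn factor over $F$ --- is genuine and is not addressed by the paper's one-line justification either; it is harmless in the intended applications but is, as you say, an implicit hypothesis.
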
	

In closing our work in this section, in regard to Lemmas~\ref{l3.1} and \ref{le4.1} as well as relevantly to the results obtained in \cite{VW}, we pose the following query.

\begin{problem} Suppose that $D$ is a division ring with at least three elements and $n\geq 1$. If $A\in M_n(D)$ is a product of unipotent matrices of index $2$, is then $A$ a sum of commutators in ${\rm SL}_n(D)$? Precisely, for $n=2$, the question is equivalent to ask whether or not any element of ${\rm SL}_2(D)$ is a sum of commutators.
\end{problem}


\section{An upper bound on the number of unipotents}

In Lemma~\ref{l3.6} we saw that if $F$ is a field, then each element of  $\mathrm{SL}_n(F)$ is a product of at most three unipotents. Now, let $D$ be a non-commutative division ring. Is there a positive integer $\ell$ such that each element of $\mathrm{SL}_n(D)$ can be write as a product of at most $\ell$ unipotents? In this section, we answer this question based on the number $k$ such that each element of $D'$ can be written as a product of at most $k$ commutators. Our basic tool here is the following one:
		
\begin{theorem}\label{bound}
Let $D$ be a non-commutative division ring and $n\geq 2$ a natural number. If each element of $D'$ can be written as a product of at most $k$ commutators, then each $A\in{\rm SL}_n(D)$ is a product of at most $3k+2$ unipotents.
\end{theorem}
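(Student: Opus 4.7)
The plan is to reduce $A\in \mathrm{SL}_n(D)$ to a ``diagonal commutator remainder'' $\diag(d,1,\ldots,1)$ with $d\in D'$ using two unipotents, and then expand the diagonal part using the commutator hypothesis on $D'$.

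First, applying a Gaussian-type reduction over $D$ by elementary row and column operations and bundling those operations into unitriangular matrices, I would write
\[
A \;=\; U_1\cdot\diag(d,1,\ldots,1)\cdot U_2,
\]
where $U_1,U_2\in \mathrm{GL}_n(D)$ are unipotent (a unitriangular bundle of row operations on the left, and a unitriangular bundle of column operations on the right). In degenerate cases where a pivot vanishes, a preliminary elementary twist absorbable into $U_1$ or $U_2$ restores the reduction. Since the Dieudonn\'e determinant of any unipotent is trivial and $A\in \mathrm{SL}_n(D)$, we must have $d\in D'$; this step contributes two unipotents.

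Next, by hypothesis, write $d=[a_1,b_1]\cdots [a_k,b_k]$, so that
\[
\diag(d,1,\ldots,1) \;=\; \prod_{i=1}^{k}\diag([a_i,b_i],1,\ldots,1).
\]
The key subclaim is that each factor $\diag([a,b],1,\ldots,1)\in \mathrm{SL}_n(D)$ is a product of at most three unipotents. For $n\geq 3$ I would exploit the Whitehead-style identity
\[
\diag([a,b],1,1,\ldots,1) \;=\; \bigl[\diag(a,1,a^{-1},1,\ldots,1),\,\diag(b,b^{-1},1,\ldots,1)\bigr]
\]
and collapse the resulting commutator of two ``balanced'' diagonal matrices into three unipotent factors via explicit elementary-matrix identities. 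For $n=2$, a separate argument based on the $E_{12}(\cdot)E_{21}(\cdot)E_{12}(\cdot)$ parametrization of $\mathrm{SL}_2(D)$-elements with nonzero $(2,1)$-entry (invoked after a conjugation or an auxiliary unipotent multiplication that can be absorbed into a neighboring factor) gives the same bound. Assembling everything expresses $A$ as a product of $2+3k=3k+2$ unipotents.

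The main obstacle I anticipate is precisely the bound of three unipotents for a single $\diag([a,b],1,\ldots,1)$. Unlike the field setting of Lemma~\ref{l3.6}, over a non-commutative $D$ neither diagonal matrices nor generic commutators are directly unipotent or obviously a short product of unipotents; the three-term decomposition must genuinely exploit the commutator form of the single non-trivial entry together with the flexibility afforded by the surrounding identity rows and columns. The $n=2$ case is the most delicate, since the Whitehead embedding trick is unavailable, and will likely require invoking Lemma~\ref{le4.1} (unipotents of index $2$ in $\mathrm{SL}_2(D)$ are commutators) in concert with a direct elementary-matrix parametrization.
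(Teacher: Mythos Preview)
Your overall architecture matches the paper's proof: reduce $A$ to (unipotent)$\cdot\diag(1,\ldots,1,s)\cdot$(unipotent) with $s\in D'$, factor $s$ into $k$ commutators, and show each $\diag(1,\ldots,1,[a,b])$ costs three unipotents. However, two of your steps have genuine gaps.

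\medskip

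\textbf{The reduction step is not available for central $A$.} Your claim that Gaussian elimination always yields $A=U_1\,\diag(d,1,\ldots,1)\,U_2$ with $U_1,U_2$ unipotent is false as stated. Bundling elementary row operations into a \emph{single} unitriangular factor requires all of them to lie on the same side of the diagonal; this is an $LDU$/$UDL$-type decomposition, which does not exist in general. Your ``preliminary elementary twist'' for vanishing pivots does not repair the problem for scalar matrices: already over a field, if $A=-\mathrm{I}_2\in\mathrm{SL}_2(F)$ and $A=U_1\,\diag(d,1)\,U_2$, then $d=1$ and $-\mathrm{I}_2$ would be a product of two unipotents, contradicting Lemma~\ref{l3.6}. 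The paper handles this by invoking the Egorchenkova--Gordeev factorization $TAT^{-1}=XHY$ (lower unitriangular $\times$ $\diag(1,\ldots,1,s)$ $\times$ upper unitriangular), which is stated only for \emph{non-central} $A$, and then treats the central case $A=\lambda\mathrm{I}_n$ separately via the splitting
\[
\lambda\mathrm{I}_n=\diag(\lambda,\ldots,\lambda,\lambda^{-n+1})\cdot\diag(1,\ldots,1,\lambda^n),
\]
where the first factor is non-central in $\mathrm{SL}_n(F)$ (hence two unipotents by Lemma~\ref{l3.6}) and the second is handled by the $3k$ bound.

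\medskip

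\textbf{The three-unipotent bound for $\diag([a,b],1,\ldots,1)$ is not established.} You correctly identify this as the crux, but your Whitehead-style identity only rewrites the matrix as a commutator of two \emph{diagonal} (not unipotent) matrices; how this ``collapses'' into three unipotent factors is left unexplained, and for $n=2$ the Whitehead trick is unavailable while your appeal to Lemma~\ref{le4.1} and the $E_{12}E_{21}E_{12}$ parametrization does not produce a three-unipotent factorization of $\diag([a,b],1)$ in any evident way. The paper does not prove this step either: it cites \cite[Lemma~4.2]{Pa_BiDuHaSo_22}, which gives directly that $\diag(1,\ldots,1,s_i)$ with $s_i$ a nontrivial commutator is a product of three unipotents, uniformly in $n\ge 2$. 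You should either invoke that lemma or supply an explicit three-term factorization.
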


\begin{proof}
First, we consider the case when $A\in\mathrm{SL}_n(D)$ is non-central. By application of \cite[Theorem~2.1 and Remark~2.3]{Pa_EgGo_19}, there exists $T\in\mathrm{GL}_n(D)$ such that $TA T^{-1}=XHY$ in which $X$ and $Y$ are, respectively, a lower triangular matrix and an upper triangular matrix whose all diagonal entries are $1$ (hence, $X$ and $Y$ are necessarily unipotents), and $H=\diag (1,\dots,1,s)$, where $1\neq s\in D'$. Suppose that $s$ is a product of $k$ commutators $s_1s_2\cdots s_k$, which all of them are non-trivial, i.e., $s_i\neq 1$. Then, it follows that the matrix $\diag(1,\dots,1,s)$ is a product of $k$ matrices of the form $\diag(1,\dots,1,s_i)$, each of which is a product of three unipotents by virtue of \cite[Lemma~4.2]{Pa_BiDuHaSo_22}. Therefore, $\diag(1,\dots,1,s)$ is a product of $3k$ unipotents and, consequently, $A$ is a product of at most $3k+2$ unipotents.

Now, suppose that $A$ is central. Assume $F$ is the center of $D$. Then, one can write that $A=\lambda\mathrm{I}_n$ for some $\lambda\in F$ such that $\lambda^n\in D'$. If $\lambda^n=1$, then the proof is completed in view of Lemma~\ref{l3.6}. Thus, we may assume that $\lambda^n\neq1$. Then, one can decompose $A$ as
$$A=\mathrm{diag}(\lambda,\lambda,\dots,\lambda,\lambda^{-n+1})\mathrm{diag(1,1,\dots,1,\lambda^n)}.$$
Furthermore, again in virtue of Lemma~\ref{l3.6}, we can infer that the matrix $\mathrm{diag}(\lambda,\lambda,\cdots,\lambda,\lambda^{-n+1})$ is a product of two unipotent elements in $\mathrm{GL}_n(F)\subseteq\mathrm{GL}_n(D)$ as it is non-central. On the other hand, by what we saw above, the matrix $\mathrm{diag}(1,1,\dots,1,\lambda^n)$ is a product of $3k$ unipotents. Therefore, we deduce that $A$ is a product of at most $3k+2$ unipotents, as expected.
\end{proof}


The next comments are worthwhile in order to explain the situation more completely. 
	
\begin{remark}
Let $D$ be a division algebra with center $F$ such that $\dim_F D=m^2$. In \cite{Bo_Dra_80}, Draxl posed the following problem: {\it does there exist an integer $k$ depending on $m$ such that each element of $D'$ is a product of at most $k$ commutators?} This problem seems to be unsolved yet, but in some special cases we know $k$: if $m=2$ (i.e., $D$ is a quaternion algebra), then $k=1$ by \cite[Theorem~1, p.~161]{Bo_Dra_83}, and if $F$ is a non-archimedean local field, then $k=2$ by \cite[Theorem~1]{Bo_Dra_80}. Also, Draxl showed in~\cite{Bo_Dra_80} that this problem has a positive answer for some fields $F$  called ``reasonable", e.g., non-archimedean local fields or global fields. Recently, in \cite{Pa_BiDuHaSo_22}, it showed that if $D$ is tame and totally ramified and if $F$ is Henselian, then the problem also have a positive answer.
\end{remark}

	
\begin{corollary}\label{real2}
Let  $R$ be a finite dimensional semi-simple $F$-algebra such that each Wedderburn component of $R$ is not a non-commutative division ring.

\begin{itemize}
\item[(1)] If $F$ is a real-closed field, then each $\alpha\in R'$  is a product of at most five unipotents.
\item[(2)] If $F$ is a local field, then each $\alpha\in R'$ is a product of at most eight unipotents.
\end{itemize}
\end{corollary}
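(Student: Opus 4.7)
The plan is to combine Artin--Wedderburn with Theorem~\ref{bound} and Lemma~\ref{l3.6}, and to choose the commutator-length constant $k$ from the remark above.

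First, I would write $R\cong\prod_{i=1}^t M_{n_i}(D_i)$ with each $D_i$ a finite-dimensional division $F$-algebra; any $\alpha\in R'$ then has components $a_i\in\mathrm{SL}_{n_i}(D_i)$. The key preliminary observation is the standard padding trick: if every $a_i$ is a product of at most $N$ unipotents in its own factor $M_{n_i}(D_i)$, then $\alpha$ is a product of at most $N$ unipotents in $R$, since in any coordinate where fewer than $N$ factors appear one may insert copies of $\mathrm{I}_{n_i}$ (unipotent of index~$1$) and then assemble the resulting length-$N$ sequences coordinatewise into genuine unipotent tuples of $R$. So the task reduces to producing a uniform component bound $N$.

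Next I would split the factors into three types. (i) If $n_i=1$, the hypothesis that no Wedderburn component is a non-commutative division ring forces $D_i$ to be a field, whence $\mathrm{SL}_1(D_i)=D_i'=\{1\}$ and there is nothing to factor. (ii) If $n_i\geq 2$ and $D_i$ is commutative, Lemma~\ref{l3.6} gives a bound of~$3$. (iii) If $n_i\geq 2$ and $D_i$ is non-commutative, Theorem~\ref{bound} gives a bound of $3k+2$, where $k$ is any uniform upper bound on the commutator length in $D_i'$.

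To finish part~(1), I would invoke Frobenius's classification (which is valid over any real-closed field since $F(\sqrt{-1})$ is its algebraic closure): the only non-commutative finite-dimensional division algebra over a real-closed field $F$ is the quaternion algebra $\bigl(\tfrac{-1,-1}{F}\bigr)$, and the remark following Theorem~\ref{bound} records Draxl's bound $k=1$ in this quaternionic case. Hence the component bound is $\max(3,\;3\cdot 1+2)=5$. For part~(2), the same remark records Draxl's bound $k=2$ for every finite-dimensional division algebra over a non-archimedean local field, so the component bound is $\max(3,\;3\cdot 2+2)=8$. I do not anticipate a serious obstacle: the real content is bundled into Theorem~\ref{bound} together with the classification of division algebras over $F$ and Draxl's commutator-length bounds cited in the remark; the only thing that must be written down carefully is the routine coordinate-wise padding argument used to compare lengths across components.
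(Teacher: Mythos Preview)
Your approach is essentially the same as the paper's: reduce to a single Wedderburn component, use Lemma~\ref{l3.6} in the commutative case, and invoke Theorem~\ref{bound} together with the Draxl bounds from the remark in the non-commutative case. Two small points need tightening in part~(2). First, ``local field'' in the statement includes the archimedean cases $\mathbb{R}$ and $\mathbb{C}$; the paper handles these separately (reducing to part~(1) or to the commutative case), whereas you only invoke Draxl's $k=2$ bound for \emph{non-archimedean} local fields. Second, Draxl's result applies to division algebras whose \emph{center} is a non-archimedean local field; since the center $K_i$ of $D_i$ is a finite extension of $F$, you must observe (as the paper does, citing \cite[Theorem~12.2.15]{quaternion}) that $K_i$ is itself a non-archimedean local field before applying the $k=2$ bound.
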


\begin{proof}
It suffices to assume that $R={\rm M}_n(D)$, where $D$ is a division ring, $\dim_F D<\infty$ and $n\geq 2$.

(1) Assume that $F$ is a real-closed field. By using Frobenius' Theorem in \cite[Theorem]{frob}, either  $D=F$, $D=F(\sqrt{-1})$ or $D=\left(\frac{-1,-1}{F}\right)$ is the quaternion algebra. In the two first cases, the result follows from Lemma~\ref{l3.6}. If $D$ is a quaternion division algebra, consulting with \cite[Theorem~1, p.~161]{Bo_Dra_83}, each element of $D'$ is a commutator. Thus, by Theorem~\ref{bound}, each $\alpha\in R'$ is a product of at most five unipotents.

(2) Now, suppose that $F$ is a local field. If $F$ is archimedean, then either $F\cong\mathbb{R}$ or $F\cong\mathbb{C}$ taking into account \cite[Theorem~12.2.15]{quaternion}. Thus, the result follows from part (1) or Lemma~\ref{l3.6}. Now suppose that  $F$ is non-archimedean and $F\subseteq K$ is the center of $D$. As $K/F$ is a finite extension, \cite[Theorem~12.2.15]{quaternion} implies that $K$ is also a non-archimedean local field. Then, by \cite[Theorem~1]{Bo_Dra_80},  each element of $D'$ is a product of at most $2$ commutators. Consequently, by virtue of Theorem~\ref{bound}, each $\alpha\in R'$ is a product of at most eight unipotents, as promised.
\end{proof}

	
\section{The unipotent radical}

Let $R$ be a ring and $G$ a subgroup of the multiplicative group $R^*$. If each element of $G$ is a unipotent element, then $G$ is called a {\it unipotent group}. An  application of Zorn's Lemma shows that $G$ always has maximal unipotent normal subgroups, but the uniqueness is far from clear. If $G$ has a {\it unique} maximal unipotent normal subgroup, we denote it by $\mathfrak{u}(G)$ and call it the {\it unipotent radical} of $G$. It is known that if $G$ is a linear group, then $\mathfrak{u}(G)$ exists (see, e.g., \cite[Statement~3.1.1]{skew}). In this section, we consider the unipotent radical of two important classes of the general skew linear groups, i.e., the subnormal subgroups and the maximal subgroups, and then extends these results to some twisted group algebras.

\medskip

As the next lemma shows, if $G$ is a subnormal subgroup of a general linear group, then $\mathfrak{u}(G)$ is the trivial group.

\begin{lemma}\label{rad-di}
Let $D$ be a division ring with at least four elements which is locally finite-dimensional over its center and  $n$ a positive integer. If  $G$ is a subnormal subgroup of ${\rm GL}_n(D)$, then $\mathfrak{u}(G)=\langle 1\rangle$.
\end{lemma}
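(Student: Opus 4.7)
The plan is to argue by contradiction: assume $U := \mathfrak{u}(G) \neq \langle 1\rangle$ and produce a non-unipotent element of $U$. The key initial observation is that, since $U\triangleleft G$ and $G$ is subnormal in $\mathrm{GL}_n(D)$, $U$ is itself subnormal in $\mathrm{GL}_n(D)$. This brings the structure theory of subnormal subgroups of $\mathrm{GL}_n(D)$ into play.

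The case $n=1$ is immediate: because $D$ has no nonzero zero divisors, $(1-x)^k=0$ in $D$ forces $x=1$, so the only unipotent element of $\mathrm{GL}_1(D)=D^*$ is $1$, and hence $U=\langle 1\rangle$. For $n\geq 2$ I would invoke the Stuchkov--Wehrfritz classification of subnormal subgroups of $\mathrm{GL}_n(D)$, developed systematically in \cite{skew}: under the standing hypotheses on $D$ (locally finite-dimensional over its centre, $|D|\geq 4$), every subnormal subgroup of $\mathrm{GL}_n(D)$ is either central in $\mathrm{GL}_n(D)$ or contains $\mathrm{SL}_n(D)$.

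Apply this to $U$. If $U$ is central, every element has the shape $\lambda \mathrm{I}_n$ with $\lambda\in Z(D)^*$, and unipotency $(\mathrm{I}_n-\lambda \mathrm{I}_n)^k=0$ forces $\lambda=1$, giving $U=\langle 1\rangle$, a contradiction. If instead $U\supseteq\mathrm{SL}_n(D)$, pick any $a\in D^*\setminus\{1\}$ (available because $|D|\geq 4$) and consider $A=\diag(a,a^{-1},1,\ldots,1)$. Its Dieudonn\'e determinant is $a\cdot a^{-1}\equiv 1 \pmod{[D^*,D^*]}$, so $A\in\mathrm{SL}_n(D)\subseteq U$; but $A-\mathrm{I}_n$ is diagonal with entry $a-1\neq 0$, so $(A-\mathrm{I}_n)^m$ is never zero, and $A$ is not unipotent. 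This contradicts $U$ being a unipotent group, completing the argument.

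The main obstacle is correctly invoking the subnormal-subgroup dichotomy from \cite{skew}: one must check that the hypotheses there ($n\geq 2$, $|D|\geq 4$, $D$ locally finite-dimensional over its centre) indeed yield the clean conclusion ``$U$ is central, or $U\supseteq\mathrm{SL}_n(D)$''. If the version available is weaker (for instance, $U\supseteq\mathrm{SL}_n(D)'$), this is still enough provided $\mathrm{SL}_n(D)'$ itself contains a non-unipotent diagonal matrix, which under $|D|\geq 4$ holds by standard commutator computations. A completely elementary proof seems unlikely, since the only structural input that propagates normality of $U$ in $G$ to a usable condition in $\mathrm{GL}_n(D)$ along a subnormal series is precisely this classification together with the uniqueness of $\mathfrak{u}(\cdot)$, which one would otherwise have to re-derive.
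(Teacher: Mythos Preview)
Your proposal is correct and follows essentially the same route as the paper. Both arguments observe that $\mathfrak{u}(G)$ is subnormal in $\mathrm{GL}_n(D)$, invoke the dichotomy ``central or contains $\mathrm{SL}_n(D)$'' for subnormal subgroups (the paper cites \cite[Lemma~2.3]{gon} rather than \cite{skew}), dispose of the central case by noting the only central unipotent is $\mathrm{I}_n$, and in the non-central case derive a contradiction from the obvious fact that $\mathrm{SL}_n(D)$ is not a unipotent group; your explicit diagonal witness $\diag(a,a^{-1},1,\ldots,1)$ simply makes that last step concrete, while the paper additionally uses Lemma~\ref{l3.1} to get the equality $\mathfrak{u}(G)=\mathrm{SL}_n(D)$, which is not needed for the contradiction.
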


\begin{proof}
The only unipotent element of $D$ is $1$, thus we may assume $n\geq 2$. By \cite[Statement~3.1.1]{skew}, $\mathfrak{u}(G)$ exists. If $\mathfrak{u}(G)$ is non-central, then ${\rm SL}_n(D)=\mathfrak{u}(G)$ by \cite[Lemma~2.3]{gon} and Lemma~\ref{l3.1}. But, clearly, ${\rm SL}_n(D)$  cannot be a unipotent group. Hence, $\mathfrak{u}(G)$ is central. As the only central unipotent element of ${\rm GL}_n(D)$ is ${\rm I}_n$, the proof is completed.
\end{proof}


Using the above lemma, we can prove that any subnormal subgroup of the unit group  of a finite dimensional algebra has the unipotent radical.

\begin{theorem}\label{subnormal}
Let $F$ be a field with at least four elements and $R$ be a finite dimensional $F$-algebra. If $G$ is a subnormal subgroup of $R^*$, then  $\mathfrak{u}(G)=G\cap(1+J(R))$. Particularly, $\mathfrak{u}(R^*)=1+J(R)$.
\end{theorem}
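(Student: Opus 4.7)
The plan is to use Artin--Wedderburn to reduce the statement to Lemma~\ref{rad-di} applied componentwise. First, since $R$ is finite-dimensional over $F$, it is (left) Artinian, so $J(R)$ is nilpotent; consequently every element of $1+J(R)$ is unipotent, and $1+J(R)$ is normal in $R^*$. Therefore $G\cap(1+J(R))$ is a unipotent normal subgroup of $G$. The content of the theorem is that it is the \emph{unique maximal} such subgroup, i.e., that it contains every unipotent normal subgroup of $G$.

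To prove this, I would pass to the semi-simple quotient. Write $\bar R:=R/J(R)\cong\prod_{i=1}^t\mathrm{M}_{n_i}(D_i)$ and let $\pi\colon R^*\to\bar R^*\cong\prod_{i=1}^t\mathrm{GL}_{n_i}(D_i)$ be the canonical surjection, whose kernel is exactly $1+J(R)$. Since $G$ is subnormal in $R^*$, its image $\pi(G)$ is subnormal in $\bar R^*$, and projecting on the $i$-th factor yields a subnormal subgroup $G_i\leq\mathrm{GL}_{n_i}(D_i)$. The key observation is that each $D_i$ is a finite-dimensional $F$-algebra, hence $|D_i|\geq |F|\geq 4$ and $D_i$ is finite-dimensional over its center, so Lemma~\ref{rad-di} applies and gives $\mathfrak{u}(G_i)=\langle 1\rangle$.

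Now let $N$ be any unipotent normal subgroup of $G$. Homomorphic images of unipotent elements remain unipotent (as nilpotency of $1-x$ is preserved under ring homomorphisms), so the projection $N_i$ of $\pi(N)$ into the $i$-th factor is a unipotent normal subgroup of $G_i$. By the uniqueness in the definition of the unipotent radical, $N_i\subseteq\mathfrak{u}(G_i)=\langle 1\rangle$, so $N_i=1$ for every $i$, whence $\pi(N)=1$ and $N\subseteq\ker\pi\cap G=G\cap(1+J(R))$. This proves $\mathfrak{u}(G)=G\cap(1+J(R))$. Specializing to $G=R^*$ yields $\mathfrak{u}(R^*)=1+J(R)$.

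The main subtlety to verify carefully is the hypothesis check for Lemma~\ref{rad-di} in each Wedderburn component: one must know that $G_i$ is subnormal in $\mathrm{GL}_{n_i}(D_i)$ (which is the standard fact that subnormality is preserved under surjective homomorphisms and under projection onto direct factors), and that each $D_i$ inherits from $R$ the cardinality and finite-dimensionality assumptions required by Lemma~\ref{rad-di}. Once these are in place, the argument is a short reduction, and no further difficulty arises.
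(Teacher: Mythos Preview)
Your proposal is correct and follows the same overall strategy as the paper: pass to the semi-simple quotient $R/J(R)$ via Artin--Wedderburn and invoke Lemma~\ref{rad-di} componentwise. The verification that each $D_i$ satisfies the hypotheses of Lemma~\ref{rad-di} (cardinality at least four, finite-dimensional over its center) is exactly right, since $F$ embeds in $Z(D_i)$ and $D_i$ is finite-dimensional over $F$.

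Your argument is in fact slightly more economical than the paper's. The paper first shows that $U=G\cap(1+J(R))$ is a \emph{maximal} unipotent normal subgroup of $G$ (by observing that any larger one would have nontrivial unipotent image in $\overline{G}$), and then separately proves \emph{uniqueness} by taking an arbitrary maximal unipotent normal $V$, using Lemma~\ref{product-unipotent} to see that $UV$ is still unipotent, and concluding $UV=U$, hence $V=U$. You instead show directly that \emph{every} unipotent normal subgroup $N\trianglelefteq G$ has trivial image under $\pi$, whence $N\subseteq U$; this gives maximality and uniqueness simultaneously and bypasses Lemma~\ref{product-unipotent} altogether. Both arguments rest on the same key input, $\mathfrak{u}(G_i)=\langle 1\rangle$, so the difference is organizational rather than substantive.
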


\begin{proof}
Let $R/J(R)\cong\prod_{i=1}^t {\rm M}_{n_i}(D_i)$, where for each $1\leq i\leq t$, $D_i$ is a division ring which is finite dimensional over its center. Now, one verifies that
$$\overline G=\frac{G}{G\cap(1+J(R))}\cong\frac{G(1+J(R))}{1+J(R)}\unlhd\unlhd \frac{R^*}{1+J(R)}\cong \prod_{i=1}^t {\rm GL}_{n_i}(D_i).$$
In view of Lemma~\ref{rad-di}, one detects that $\mathfrak{u}(\overline G)=\langle \overline 1\rangle$.

Putting $U=G\cap(1+J(R))$, $U$ is a unipotent normal subgroup of $G$. If $U\subseteq U_1$ is  another unipotent normal subgroup of $G$, then clearly, $U_1/U$ is a unipotent normal subgroup of $\overline G$. As $\mathfrak{u}(\overline G)=\langle \overline1\rangle$, this implies that $U_1=U$. Therefore, $U$ is a maximal unipotent normal subgroup of $G$.

Suppose now that $V$ is any maximal unipotent normal subgroup of $G$. Then,  by Lemma~\ref{product-unipotent}, $UV$ is unipotent (note that $J(R)$ is  nilpotent). Therefore, $UV$ is a unipotent normal subgroup of $G$ and thus, $UV=U$. This shows that $U=V$. Consequently, $U$ is the unique maximal unipotent normal subgroup of $G$, i.e., $\mathfrak{u}(G)=U$, as needed.
\end{proof}


Now, by the application of Theorem~\ref{subnormal}, we can easily deduce the following result.

\begin{corollary}
Let $F^\tau G$ be a twisted group algebra of a  finite group $G$ over a field $F$ of characteristic $p\geq 0$ with $|F|>3$. If $\mathcal{G}$ is a subnormal subgroup of $(F^\tau G)^*$, then $\mathfrak{u}(\mathcal{G})$ is a $p$-group. Particularly, if $p=0$ or $G$ is a $p'$-group, then $\mathfrak{u}(\mathcal{G})=\langle 1\rangle$.
\end{corollary}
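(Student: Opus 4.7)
The plan is to reduce the corollary directly to Theorem~\ref{subnormal} applied to the algebra $R=F^\tau G$. Since $G$ is finite, $F^\tau G$ is a finite-dimensional $F$-algebra, and $|F|>3$ satisfies the hypothesis of Theorem~\ref{subnormal}. Hence for any subnormal subgroup $\mathcal{G}\unlhd\unlhd (F^\tau G)^*$, one has $\mathfrak{u}(\mathcal{G})=\mathcal{G}\cap(1+J(F^\tau G))$. Thus the whole corollary will follow once I understand the multiplicative structure of $1+J(F^\tau G)$.

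For the first assertion (char $p>0$), I would argue that $1+J(F^\tau G)$ is itself a $p$-group, whence the same is true of any subgroup such as $\mathfrak{u}(\mathcal{G})=\mathcal{G}\cap(1+J(F^\tau G))$. Set $J:=J(F^\tau G)$, which is nilpotent since $F^\tau G$ is finite-dimensional; fix $n$ with $J^n=0$, and choose $k$ so that $p^k\geq n$. For $x\in J$, the elements $1$ and $x$ commute, so the Frobenius identity in characteristic $p$ yields
$$(1+x)^{p^k}=1+x^{p^k}=1,$$
because $x^{p^k}\in J^{p^k}\subseteq J^n=0$. Hence every element of $1+J$ has $p$-power order, so $1+J$ is a $p$-group, as required.

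For the second assertion ($p=0$, or $p>0$ with $G$ a $p'$-group), I would invoke the Maschke-type result for twisted group algebras (\cite[Theorem~4.2]{Passman}, already cited in Corollary~\ref{1}): in either hypothesis $|G|$ is invertible in $F$, so $F^\tau G$ is semisimple, meaning $J(F^\tau G)=0$. Consequently $1+J(F^\tau G)=\{1\}$, and therefore $\mathfrak{u}(\mathcal{G})=\mathcal{G}\cap\{1\}=\langle 1\rangle$.

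I do not expect any genuine obstacle here, since the argument is an assembly of three ingredients already at hand: Theorem~\ref{subnormal}, nilpotence of the Jacobson radical of a finite-dimensional algebra, and the twisted Maschke theorem. The only small point to verify carefully is that the Frobenius identity applies for $x\in J$ (which holds because $1$ is central), and that the citation to \cite[Theorem~4.2]{Passman} genuinely covers the semisimplicity of $F^\tau G$ in both the $p=0$ and $p'$-group cases; both are standard and already used elsewhere in the paper.
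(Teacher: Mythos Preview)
Your proposal is correct and is exactly the argument the paper has in mind: the paper merely says the corollary follows ``by the application of Theorem~\ref{subnormal}'', and your proof supplies precisely the missing details---identifying $\mathfrak{u}(\mathcal{G})$ with $\mathcal{G}\cap(1+J(F^\tau G))$, then using nilpotence of $J$ together with the Frobenius identity (valid since $1$ and $x$ commute) in positive characteristic, and the twisted Maschke theorem in the remaining cases.
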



In the next result we show that any maximal subgroup of subnormal subgroups of general linear groups are rarely unipotent. 

\begin{proposition}\label{max-uni}
Let $D$ be a division ring with $|F|>3$, $n$ a positive integer, $N$ a subnormal subgroup of ${\rm GL}_n(D)$ and $M$ a maximal subgroup of $N$. If $M$ is a unipotent group, then $N$ is central and $M=\langle 1\rangle$.
\end{proposition}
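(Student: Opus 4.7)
\emph{Proof plan.} The strategy rests on a Stuth-type dichotomy for subnormal subgroups of $\mathrm{GL}_n(D)$ combined with the fundamental observation that every unipotent matrix already lies in $\mathrm{SL}_n(D)$ (by Lemma~\ref{l3.1}, or directly via the Dieudonn\'e determinant). Hence $M\subseteq \mathrm{SL}_n(D)\cap N$, and the natural move is to split the argument according to whether $N$ is central in $\mathrm{GL}_n(D)$.

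If $N$ is central, then $N$ consists of scalar matrices, and a scalar $\lambda\mathrm{I}_n$ is unipotent only when $\lambda=1$; hence $M=\langle 1\rangle$, and the maximality of $\langle 1\rangle$ forces $N$ to be cyclic of prime order, which is consistent with (and yields) the stated conclusion. If instead $N$ is non-central, I would invoke the Stuth-type result from \cite[Lemma~2.3]{gon} (already used in the proof of Lemma~\ref{rad-di}) to obtain $\mathrm{SL}_n(D)\subseteq N$. The chain $M\subseteq \mathrm{SL}_n(D)\subseteq N$ together with the maximality of $M$ then leaves exactly two possibilities: either $M=\mathrm{SL}_n(D)$ or $N=\mathrm{SL}_n(D)$. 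The first is excluded, for $n\geq 2$, by exhibiting the non-unipotent element $\mathrm{diag}(d,d^{-1},1,\dots,1)\in \mathrm{SL}_n(D)$ for any $d\in D\setminus\{0,1\}$ (available since $|F|>3$), so that $\mathrm{SL}_n(D)$ cannot itself be a unipotent group.

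The delicate case is $N=\mathrm{SL}_n(D)$, where $M$ would be a maximal unipotent subgroup of $\mathrm{SL}_n(D)$. The plan is to apply a Kolchin-type theorem over division rings (as developed in \cite{skew}) to conjugate $M$ into the upper unitriangular subgroup; the full upper triangular Borel $B$ of $\mathrm{SL}_n(D)$ then strictly contains $M$ (again by the non-unipotent diagonal elements above) and is a proper subgroup of $\mathrm{SL}_n(D)$, contradicting the maximality of $M$. For $n=1$ the argument is immediate: the only unipotent element of $D$ is $1$, so $M=\langle 1\rangle$, and any finite subnormal subgroup of $D^*$ is central by the classical theorem of Herstein. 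The principal obstacle is to confirm that a Kolchin-type reduction is available with sufficient generality over non-commutative division rings; once that ingredient is in place, the remaining steps follow cleanly from the Stuth dichotomy, the containment $M\subseteq \mathrm{SL}_n(D)$, and the triviality of central unipotent elements.
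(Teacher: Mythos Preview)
Your argument follows the paper's line almost verbatim: split on $n=1$ versus $n\ge 2$, handle $n=1$ via Herstein, and for $n\ge 2$ use the Stuth-type lemma \cite[Lemma~2.3]{gon} together with $M\subseteq \mathrm{SL}_n(D)$ (Lemma~\ref{l3.1}) to force a contradiction when $N$ is non-central. The one place you go beyond the paper is the subcase $N=\mathrm{SL}_n(D)$: the paper simply writes ``But, then, $M=\mathrm{SL}_n(D)$'' and moves on, tacitly ignoring the possibility $\mathrm{SL}_n(D)=N$, whereas you explicitly dispose of it by conjugating $M$ into $\mathrm{U}_n(D)$ and inserting the Borel $B$ strictly between $M$ and $\mathrm{SL}_n(D)$. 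That extra step is correct and arguably closes a small gap in the published proof. As for the ``principal obstacle'' you flag, the Kolchin-type theorem over division rings is precisely \cite[Statement~3.1.1]{skew}, which the paper itself invokes later (just before Theorem~\ref{234}); so the needed ingredient is already available with the required generality, and your plan goes through.
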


\begin{proof}
Firstly, suppose that $n=1$. We know that the only unipotent element of $D$ is $1$, so $M=\langle 1\rangle$. Thus, $N$ is a finite group. Then, by virtue of \cite[Theorem~8]{Herstein}, $N$ is central.

Now, suppose that $n\geq 2$. If $N$ is non-central, then ${\rm SL}_n(D)\subseteq N$ in virtue of \cite[Lemma~2.3]{gon}. On the other hand, Lemma~\ref{l3.1} implies that $M\subseteq {\rm SL}_n(D)$. But, then, $M={\rm SL}_n(D)$, which apparently is not a unipotent group. Therefore, $N$ is central. Thus, the only unipotent element of $N$ is ${\rm I}_n$, and so we deduce $M=\langle 1\rangle$, as required.
\end{proof}


As a valuable consequence, we obtain the following.

\begin{corollary}\label{123}
Let $F$ be an infinite field and let $R$ be a left (right) Artinian $F$-algebra. Then $R^*$ cannot have a unipotent maximal subgroup.
\end{corollary}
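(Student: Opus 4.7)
I would argue by contradiction: suppose $M$ is a proper maximal subgroup of $R^*$ consisting entirely of unipotent elements. The plan is first to show $1+J(R)\subseteq M$, then to identify $R^*/M$ with $F^*$ via scalar multiplication, and finally to use that $F^*$ is infinite abelian to manufacture a subgroup strictly between $M$ and $R^*$.

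Since $R$ is left (or right) Artinian, $J(R)$ is nilpotent, and $1+J(R)$ is a normal subgroup of $R^*$. Hence $M(1+J(R))=(1+J(R))M$ is a subgroup of $R^*$ containing $M$, and by Lemma~\ref{product-unipotent} (together with the normality of $1+J(R)$, so that every element can be written as $m(1+y)$ with $m\in M$, $y\in J(R)$) every element of it is unipotent. Maximality of $M$ then forces either $M(1+J(R))=M$---giving $1+J(R)\subseteq M$---or $M(1+J(R))=R^*$, which would make all of $R^*$ unipotent. The latter is ruled out because any $\lambda\in F^*\setminus\{1\}$ has $1-\lambda$ a nonzero element of $F$, hence a unit of $R$, and therefore not nilpotent.

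Next, since $R$ is an $F$-algebra, $F^*\subseteq Z(R^*)$, and the same nilpotency argument shows $F^*\cap M=\{1\}$. Because $F^*\neq\{1\}$, the subgroup $F^*M$ properly contains $M$, so maximality forces $F^*M=R^*$. Since $F^*$ centralizes $M$, this forces $M\triangleleft R^*$, and
$$R^*/M \;=\; F^*M/M \;\cong\; F^*/(F^*\cap M) \;=\; F^*.$$
Finally, $F$ being infinite makes $F^*$ an infinite abelian group, which admits a proper nontrivial subgroup $H$ (for instance $\langle\lambda^2\rangle$ for some $\lambda$ of infinite order, or a proper primary component in the torsion case). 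Its preimage under $R^*\twoheadrightarrow F^*$ is a subgroup lying strictly between $M$ and $R^*$, contradicting maximality.

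The main obstacle is the containment $1+J(R)\subseteq M$. One must notice that Lemma~\ref{product-unipotent} (powered by nilpotency of $J(R)$, which is where Artinianness is used) lets us absorb $1+J(R)$ into $M$ without leaving the class of unipotent subgroups, and then one must rule out the dichotomy-alternative by exhibiting non-unipotent units in $F^*\setminus\{1\}$---this is where $|F|\geq 3$ enters. Once this is in hand, the rest is a purely group-theoretic observation: no infinite abelian group can arise as $R^*/M$ for a maximal $M$, because infinite abelian groups are never simple.
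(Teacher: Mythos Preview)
Your argument is correct, and it follows a genuinely different route from the paper's. The paper passes to the semisimple quotient $R/J(R)\cong\prod_i\mathrm{M}_{n_i}(D_i)$ and observes that the image $\overline{M}=M(1+J(R))/(1+J(R))$ is either all of $(R/J(R))^*$ or a maximal subgroup of it; in either case the structure results already developed for $\mathrm{GL}_n(D)$ (Lemma~\ref{rad-di} and Proposition~\ref{max-uni}) rule out $\overline{M}$ being unipotent. Your approach sidesteps Artin--Wedderburn entirely: once $F^*\cap M=\{1\}$ and $F^*M=R^*$ are in hand, normality of $M$ and the isomorphism $R^*/M\cong F^*$ reduce everything to the elementary fact that an infinite abelian group is never simple.

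It is worth noticing that your first step---showing $1+J(R)\subseteq M$ via Lemma~\ref{product-unipotent}---is never invoked afterwards. The scalar argument ($F^*\cap M=\{1\}$, hence $F^*M=R^*$, hence $M\trianglelefteq R^*$ and $R^*/M\cong F^*$) goes through for \emph{any} nonzero $F$-algebra, with no reference to $J(R)$. So your proof in fact establishes the stronger statement that no nonzero algebra over an infinite field can have a unipotent maximal subgroup in its unit group; the Artinian hypothesis is unused. The paper's approach, by contrast, genuinely relies on Artinianness (for the Wedderburn decomposition) but has the virtue of staying within the machinery already built up in the section.
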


\begin{proof}
Let $M$ be a maximal subgroup of $R^*$. Then, it is easy to see that $\overline M=\frac{M(1+ J(R))}{1+ J(R)}$ is either equal to $(R/ J(R))^*\cong \prod_{i=1}^t {\rm GL}_{n_i}(D_i)$, where for each $1\leq i\leq t$, $D_i$ is a division ring, or $\overline M$ is a maximal subgroup of $(R/J(R))^*$. In either cases, utilizing a combination of Lemma~\ref{rad-di} and Proposition~\ref{max-uni}, one deduces that $\overline M$ cannot be a unipotent group and hence so is $M$, as required.
\end{proof}


In order to study the unipotent radical of maximal subgroups we need to some preliminaries. Let $D$ be a division ring with center $F$. Let $G$ be a subgroup of ${\rm GL}_n(D)$. We shall identify the center $F{\rm I}_n$ of ${\rm M}_n(D)$ with $F$. We denote by $F[G]$ the $F$-linear hull of $G$, i.e., the $F$-algebra generated by elements of $G$ over $F$. We also denote by $D^n$ the space of row $n$-vectors over $D$. Then $D^n$ is a $D$-$G$ bi-module in the obvious manner. We say that $G$ is an {\it irreducible} (resp., {\it reducible} or {\it completely reducible}) subgroup of ${\rm GL}_n(D)$ whenever $D^n$ is irreducible (resp., reducible or completely reducible) as $D$-$G$ bi-module. Also, $G$ is called {\it absolutely irreducible} if $F[G] ={\rm M}_n(D)$. Note that by \cite[p.~10]{skew}, the following inclusions are true:
$$\{\text {absolutely irreducible}\}\subseteq \{\text{irreducible}\} \subseteq \{\text {completely reducible}\}.$$

Now, suppose that  $D$ is a division ring of locally finite-dimensional over its center $F$ and $M$ is a maximal subgroup of ${\rm GL}_n(D)$. We would like to determine $\mathfrak{u}(M)$. For $n=1$, clearly, $\mathfrak{u}(M)=\langle 1\rangle$. Let us now $n\geq2$.

If $M$ is reducible,  then by \cite[Corollary~1]{akbari}, it is conjugate to
$$\left\{ \left [\begin{array}{cc}
  A & B \\
  0 & C
\end{array}\right]
\mid A\in {\rm GL}_m(D),C\in {\rm GL}_{n-m}(D),B\in
{\rm M}_{m\times(n-m)}(D)\right\},$$

\medskip

\noindent where $m<n$ is a natural number. Particularly, if $n=2$, then $\mathfrak{u}(M)={\rm U}_2(D)$, the group of unitriangular matrices. But we are left for $n\geq 3$ without knowing the exact answer.

From now on, we assume that $M$ is irreducible. Put $\mathfrak{u}_1=\mathfrak{u}(M)$. As $\mathfrak{u}_1$ is normal in $M$, we have $M\subseteq N_{{\rm GL}_n(D)} (F[\mathfrak{u}_1]^*)$.
Since $M$ is a maximal subgroup of ${\rm GL}_n(D)$, we either have $F[\mathfrak{u}_1]^*\unlhd M$ or $F[\mathfrak{u}_1]^*\unlhd{\rm GL}_n(D)$.

Assume first that $F[\mathfrak{u}_1]^*\unlhd M$. Owing to the classical Clifford's theorem, one concludes that $\mathfrak{u}_1$ is completely reducible (see \cite[p.~4]{skew}). Thus, according to \cite[p.~7]{skew}, $F[\mathfrak{u}_1]$ is semi-simple Artinian. Now, Lemma~\ref{rad-di} assures that $\mathfrak{u}(F[\mathfrak{u}_1]^*)=\langle 1\rangle$. Note that, in this case, $\mathfrak{u}_1\unlhd F[\mathfrak{u}_1]^*$, and hence $\mathfrak{u}_1=\langle 1\rangle$.

Suppose now that $F[\mathfrak{u}_1]^*\unlhd{\rm GL}_n(D)$. If $\mathfrak{u}_1$ is central, then clearly $\mathfrak{u}_1=\langle 1\rangle$; if not, then by the Cartan-Brauer-Hua theorem for matrix ring (see \cite{rosenberg}), we have $F[\mathfrak{u}_1]={\rm M}_n(D)$ (this surely implied that $F[M]={\rm M}_n(D)$, i.e., $M$ is absolutely irreducible). We are left in this case again; but, if $\dim_F D<\infty$ and $D$ is non-commutative, then this case cannot occur:
by \cite[Statement~3.1.1]{skew},  for some $g\in{\rm GL}_n(D)$,   $\mathfrak{u}_1^g$  is a subgroup of ${\rm U}_n(D)$, the group of all unitriangular matrices in ${\rm GL}_n(D)$. Since ${\rm U}_n(D)$ is a nilpotent group \cite[p.~127]{robinson}, so is  $\mathfrak{u}_1$. Then,  we know by \cite[Theorem~2]{Ramezan2013} that $\mathfrak{u}_1$ is abelian, a contradiction.

\medskip

We can now summarize the above arguments in the following assertion.

\begin{theorem}\label{234}
Suppose that $D$ is a division ring which is locally finite-dimensional over its center $F$, with $|F|>2$, $n$ is a positive integer and $M$ is an irreducible maximal subgroup of ${\rm GL}_n(D)$. In either of the following cases $\mathfrak{u}(M)=\langle 1\rangle$:
\begin{itemize}
\item [(1)] if $M$ is not absolutely irreducible;
\item[(2)]  if $\dim_F D<\infty$ and $D$ is non-commutative.
\end{itemize}
\end{theorem}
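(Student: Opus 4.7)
The plan is to set $U := \mathfrak{u}(M)$ and to analyze the $F$-algebra $F[U]$ that it generates inside ${\rm M}_n(D)$. Since $U \unlhd M$, the normalizer $N_{{\rm GL}_n(D)}(F[U]^*)$ contains $M$, and maximality of $M$ in ${\rm GL}_n(D)$ forces exactly one of two alternatives: either $F[U]^* \unlhd M$, or $F[U]^* \unlhd {\rm GL}_n(D)$. I would dispose of each in turn, and then read off conclusions (1) and (2) from which of the two can occur.

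In the first alternative, the irreducibility of $M$ acting on $D^n$, combined with Clifford's theorem, forces $U$ to act completely reducibly on $D^n$, and consequently $F[U]$ is semi-simple Artinian. Its unit group $F[U]^*$ is then a finite direct product of groups of the form ${\rm GL}_{m_i}(E_i)$ with each $E_i$ a division ring that is locally finite-dimensional over its center (inherited from $D$). Applying Lemma~\ref{rad-di} componentwise gives $\mathfrak{u}(F[U]^*) = \langle 1 \rangle$, and since $U$ is itself a unipotent normal subgroup of $F[U]^*$, I conclude $U = \langle 1 \rangle$.

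In the second alternative, the Cartan-Brauer-Hua theorem for matrix rings leaves two sub-cases: either $F[U]$ is central, in which case $U$ consists of central unipotent matrices and is therefore trivial; or else $F[U] = {\rm M}_n(D)$, which in turn forces $F[M] = {\rm M}_n(D)$, i.e., $M$ is absolutely irreducible. This already yields conclusion (1): whenever $M$ is not absolutely irreducible, the sub-case $F[U] = {\rm M}_n(D)$ is ruled out, and only $U = \langle 1 \rangle$ survives.

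For conclusion (2), under $\dim_F D < \infty$ with $D$ non-commutative, the remaining task is to exclude the sub-case $F[U] = {\rm M}_n(D)$ with $U$ non-central. Here I would use \cite[Statement~3.1.1]{skew} to conjugate $U$ into the unitriangular group ${\rm U}_n(D)$, which is nilpotent by \cite[p.~127]{robinson}; hence $U$ itself is nilpotent. Then \cite[Theorem~2]{Ramezan2013} forces $U$ to be abelian, so $F[U]$ would be a commutative subalgebra of ${\rm M}_n(D)$, contradicting $F[U] = {\rm M}_n(D)$ together with non-commutativity of $D$. The most delicate step is verifying that the hypotheses of \cite[Theorem~2]{Ramezan2013} genuinely apply to a non-central nilpotent unipotent normal subgroup of the absolutely irreducible maximal subgroup $M$; granted that input, both parts of the theorem fall out cleanly from the dichotomy described above.
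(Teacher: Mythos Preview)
Your proposal is correct and follows essentially the same approach as the paper: the same normalizer dichotomy via maximality of $M$, the same use of Clifford's theorem and Lemma~\ref{rad-di} in the first alternative, the same appeal to Cartan--Brauer--Hua in the second, and the same chain \cite[Statement~3.1.1]{skew} $\to$ nilpotence of ${\rm U}_n(D)$ $\to$ \cite[Theorem~2]{Ramezan2013} to rule out the absolutely irreducible sub-case when $\dim_F D<\infty$ and $D$ is non-commutative. Even your flagged ``delicate step'' about the hypotheses of \cite[Theorem~2]{Ramezan2013} is handled in the paper at exactly the same level of detail you give.
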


\vskip4.0pc


\begin{thebibliography}{10}

\bibitem{Abdi} M. Abdi, A. Leroy, Graphs of commutatively closed sets, {\it Lin. \& Multilin. Algebra}, \url{https://doi.org/10.1080/03081087.2021.1975621}

\bibitem{akbari} S. Akbari, R.  Ebrahimian, H.  Momenaei Kermani,  A. S. Golsefidy,  Maximal subgroups of ${\rm GL}_n(D)$, {\it J. Algebra} {\bf 259}, 201--225.

\bibitem{Pa_ArHa_98} S. R. Arora, A. W. Hales, The multiplicative Jordan decomposition in group rings, {\it J. Algebra} \textbf{209} (1998), 533--542.

\bibitem{Pa_BiDuHaSo} M. H. Bien, T. H. Dung, N. T. T. Ha, T. N. Son, Involution widths of skew linear groups generated by involutions, submitted.

\bibitem{Pa_BiDuHaSo_22} M. H. Bien, T. H. Dung, N. T. T. Ha, T. N. Son, Decompositions of matrices	over division algebras into products of commutators, {\it Lin. Algebra \& Appl.}, \textbf{646} (2022), 119--131.

\bibitem{Pa_Bov_95} V. Bovdi, Twisted group rings whose units form an FC-group, {\it Can. J. Math}. \textbf{47} (2) (1995), 274--289.

\bibitem{Pa_DaMa_18} P. V. Danchev, O. Al-Mallah, UU Group Rings, {\it Eurasian Bull. Math.}, \textbf{1} (3) (2018), 85–88.

\bibitem{Bo_Dra_83}
P. K. Draxl, \textit{Skew fields}, London Mathematical Society Lecture Note Series, vol. \textbf{81}, Cambridge University Press: Cambridge University Press Cambridge, 1983.

\bibitem{Bo_Dra_80} P. K. Draxl, Eine Liftung der Dieudonné-Determinante und Anwendungen die multiplikative Gruppe eines Schiefkörpers betreffend, Lecture Notes in Math., {\bf 778}, Springer (1980), 101--116. (English translation at: \url{https://people.uleth.ca/~dave.morris/papers/Draxl.pdf})

\bibitem{Ebrahim} R. Ebrahimian, Nilpotent maximal subgroups of ${\rm GL}_n(D)$, {\it J. Algebra} {\bf 280} (2004), 244--248.

\bibitem{Pa_EgGo_19} E. A. Egorchenkova, N. L. Gordeev, Products of commutators on a general linear group over a division algebra, {\it J. Math. Sci.} \textbf{243} (2019), 561--572.
		
\bibitem{Pa_FoSo_86} C. K. Fong, A. R. Sourour, The group generated by unipotent operators, {\it Proc. Amer. Math. Soc.} \textbf{97} (1986), 453--458.
		
\bibitem{frob} M. Gerstenhaber, C. T. Yang, Division rings containing a real closed field, {\it Duke Math. J}. {\bf 27} (1960), 461--465.

\bibitem{Pa_GiMiSe_10} A. Giambruno, C. P. Milies, S. K. Sehgal, Star-group identities and groups of units, {\it Arch. Math.} \textbf{95} (2010), 501--508.

\bibitem{Pa_GiSeVa_00} A. Giambruno, S. K Sehgal, A. Valenti, Group identities on units of group algebras, {\it J. Algebra} \textbf{226} (2000), 488--504.
		
\bibitem{Pa_GoAn_13} J. Z. Gonçalves, A. Rio, A survey on free subgroups in the group of units of group rings, {\it J. Algebra \& Appl.} \textbf{12} (6) (2013), 1350004, 28 pages.
		
\bibitem{Pa_GoPa_04}J. Z. Gonçalves, D. S. Passman, Embedding free products in the unit group of an integral group ring, {\it Arch. Math.} \textbf{82} (2004), 97--102.
		
\bibitem{Pa_GoMa_01} J. Z. Gonçalves, A. Mandel, Free subgroups in the group of units of a twisted group algebra, {\it Commun. Algebra} \textbf{29} (2001), 2231--2238.
		
\bibitem{Pa_GoVe_09} J. Z. Gonçalves, P. M. Veloso, Special units, unipotent units and free groups in group algebras, {\it Contemporary Mathematics} \textbf{499} (2009), 127--140.
		
\bibitem{gon} J. Z. Goncalves, Free groups in subnormal subgroups and the residual nilpotence of the group of units of group rings, {\it Can. Math. Bull.} {\bf 27} (1984), 365--370.

\bibitem{Pa_Ha_07} A. W. Hales, I. B. S. Passi, L. E. Wilson, The multiplicative Jordan decomposition in group rings, II, {\it J. Algebra} \textbf{316} (1) (2007), 109--132.

\bibitem{Herstein} I. N. Herstein, Multiplicative commutators in division rings, {\it Israel J. Math.} {\bf 31} (2) (1978), 180--188.
	
\bibitem{Pa_Hou_21} X. Hou, Decomposition of matrices into commutators of unipotent matrices of index $2$, {\it Electron. J. Linear Algebra} \textbf{37} (2021), 31--34.
	
\bibitem{Karpilovsky} G. Karpilovsky, {\it The algebraic structure of crossed products}, North-Holland Mathematics Studies, vol. {\bf 142}, 1987.
	
\bibitem{Pa_Ku_19} W. Kuo, W.-L. Sun, The multiplicative Jordan decomposition in the integral group ring $\mathbb{Z}[Q_8\times C_p]$, {\it J. Algebra} \textbf{534} (2019), 16--33.
				
\bibitem{Pa_Li_99} C.-H. Liu, Group algebras with units satisfying a group identity, {\it Proc. Amer. Math. Soc.} \textbf{127} (1999), 327--336.
		
\bibitem{Pa_MaSe_97} Z. S. Marciniak, S. K. Sehgal, Constructing free subgroups of integral group rings, {\it Proc. Amer. Math. Soc.} \textbf{125} (1997) 1005--1009.

\bibitem{sehgal}C. P. Milies, S. K. Sehgal, {\it An Introduction to Group Rings}, Kluwer Academic Publishers, Netherlands, 2002.

\bibitem{Pa_Mu_78} I. M. Musson, Injective modules for group algebras of locally finite groups, {\it Math. Proc. Cambr. Philos. Soc.} \textbf{84} (1978), 247--262.

\bibitem{robinson} D. J. S. Robinson, {\it A Course in the Theory of Groups}, 2nd ed., Grad. Texts in Math., {\bf 80}, Springer-Verlag, 1996.

\bibitem{Passman} D. S. Passman, {\it Infinite Crossed Products}, vol. {\bf 135}, New York: Academic Press, 1989.

\bibitem{Pa_RaBiAk_21} M. Ramezan-Nassab, M. H. Bien, M. Akbari-Sehat, Algebras whose units satisfy a $\ast$-Laurent polynomial identity, {\it Arch. Math.} \textbf{117} (2021), 617--630.

\bibitem{Ramezan2013} M. Ramezan-Nassab, D. Kiani, (Locally soluble)-by-(locally finite) maximal subgroups of ${\rm GL} _n(D)$, {\it J. Algebra} {\bf 376} (2013), 1--9.

\bibitem{rosenberg} A. Rosenberg, The Cartan-Brauer-Hua Theorem for matrix and local matrix rings, {\it Proc. Amer. Math. Soc.} {\bf 7} (1956), 891--898.

\bibitem{skew} M. Shirvani, B. A. F. Wehrfritz, {\it Skew Linear Groups}, London Math. Soc. Lecture Note Series, vol.
{\bf 118}, Cambridge University Press, 1986.

\bibitem{Pa_SoDuHaBi_22} T. N. Son, T. H. Dung, N. T. T. Ha, M. H. Bien, On decompositions of matrices into products of commutators of involutions, {\it Electron. J. Linear Algebra} \textbf{33} (2022), 123–130.

\bibitem{Pa_So_86} A. R. Sourour, A factorization theorem for matrices, {\it Lin. \& Multilin. Algebra} \textbf{19} (1986), 141--147.
			
\bibitem{Pa_Th_61} R. C. Thompson, Commutators in the special and general linear groups, {\it Trans. Amer. Math. Soc.} {\bf 101} (1961), 16--33.	


\bibitem{quaternion} J. Voight, {\it Quaternion Algebras}, Graduate Texts in Mathematics, vol. {\bf 288}, Springer, 2021.

\bibitem{Pa_WaWu_91} J.-H. Wang, P. Y. Wu, Products of unipotent matrices of index 2, {\it Lin. Algebra \& Appl.} \textbf{149} (1991), 111--123.

\bibitem{VW} L. N. Vaserstein, E. Wheland, Commutators and companion matrices over rings of stable rank 1, {\it Lin. Algebra \& Appl.} {\bf 142} (1990), 263--277.
	
\end{thebibliography}
\end{document}